\newtheorem{theorem}{Theorem}[section]
\newtheorem{conjecture}[theorem]{Conjecture}
\newtheorem{lemma}[theorem]{Lemma}
\newtheorem{corollary}[theorem]{Corollary}
\newtheorem{remark}[theorem]{Remark}
\newtheorem{${}$}[theorem]{${}$}
\begin{document}

\title{Algebraic cycles on a generalized Kummer variety}

\author{Ze Xu}

\date{June 13, 2015}

\begin{abstract}
We compute explicitly the Chow motive of any generalized Kummer variety associated to any abelian surface. In fact, it lies in the rigid tensor subcategory of the category of Chow motives generated by the Chow motive of the underlying abelian surface. One application of this calculation is to show that the Hodge conjecture holds for arbitrary products of generalized Kummer varieties. As another application, all numerically trivial 1-cycles on arbitrary products of generalized Kummer varieties are smash-nipotent.
\end{abstract}
\maketitle

\bigskip

\section{Introduction}
\bigskip
This is part of the program to study algebraic cycles on projective hyperk\"{a}hler varieties. A smooth connected complex projective variety $X$ is \emph{hyperk\"{a}hler} if it is simply connected and $H^{2,0}(X)$ is generated by a nowhere vanishing holomorphic 2-form of $X$. This kind of variety is one of the three building blocks of smooth projective varieties with trivial canonical bundle due to Beauville-Bogomolov. Basic examples of projective hyperk\"{a}hler varieties are the Hilbert schemes of closed subschemes of finite length on projective K3 surfaces, generalized Kummer varieties, their projective deformations \cite{Bea83} and some sporadic examples.

This paper is devoted to the study of algebraic cycles on generalized Kummer varieties.

Let $A$ be a complex abelian surface. For any positive integer $n$, let $A^{(n)}$ be the $n$-th symmetric product of $A$ and $A^{[n]}$ be the Hilbert scheme of closed subschemes of length $n$ on $A$. Let $\pi: A^{[n]}\rightarrow A^{(n)}$ be the Hilbert-Chow morphism and $\sigma: A^{(n)}\rightarrow A$ be the addition morphism. Let $\alpha:=\sigma\circ\pi: A^{[n]}\rightarrow A$ be the composition morphism. It is clear that $\alpha$ is an isotrivial fibration. In fact, after the base change by $\textbf{n}: A\rightarrow A$, it is a trivial fibration. If $n\geq 2$, then the fiber $K^{[n]}:=\alpha^{-1}(o_{A})$ is a projective hyperk\"{a}hler variety of dimension $2(n-1)$, called the $n$-th \emph{generalized Kummer variety} associated to $A$ \cite{Bea83}. In fact, $K^{[2]}$ is just the Kummer K3 surface associated to $A$.

The first main result of this paper is the calculation of the Chow motive of a generalized Kummer variety. To state the result, we introduce some notations. Let $P(n)$ be the set of partitions of $n$. For any partition $\lambda=(\lambda_{1},\ldots,\lambda_{\ell_{\lambda}})=1^{a_{1}}2^{a_{2}}\cdots r^{a_{r}}$, let $\ell_{\lambda}$ be its length and $a_{i}$ be the appearing time of $i$. For any $\lambda\in P(n)$, define $e(\lambda):=gcd\{\lambda_{1},\ldots,\lambda_{\ell_{\lambda}}\}$. For any positive integer $m$, denote by $A[m]$ the group of $m$-torsion points of $A$. The starting point of our calculation is the construction of a natural stratification for the projective surjective semi-small morphism $\pi_{0}: K^{[n]}\rightarrow K^{(n)}:=\sigma^{-1}(o_{A})$ induced by $\pi$: $K^{(n)}=\coprod_{\lambda\in P(n)}\coprod_{\tau\in A[e(\lambda)]}K_{\lambda,\tau}^{(n)}$. For the closure $\overline{K}_{\lambda,\tau}^{(n)}$ of each stratum $K_{\lambda,\tau}^{(n)}$, there is a normalization morphism $K_{\tau}^{(\lambda)}\rightarrow\overline{K}_{\lambda,\tau}^{(n)}$. Here $K_{\tau}^{(\lambda)}$ is a torsion translation of $K_{o}^{(\lambda)}$, a quotient of an abelian variety. For a precise description of $K_{\tau}^{(\lambda)}$, see Section 2.

\begin{theorem}\label{thm1.1}
Let $A$ be a complex abelian surface and $K^{[n]}$ be its associated $n$-th generalized Kummer variety. Then in the category of Chow motives, there is a canonical isomorphism
$$
h(K^{[n]})\cong\bigoplus_{\lambda\in P(n)}\bigoplus_{\tau\in A[e(\lambda)]}h(K_{\tau}^{(\lambda)})\otimes\mathbb{L}^{n-\ell(\lambda)}.
$$
\end{theorem}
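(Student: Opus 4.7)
The plan is to apply the motivic decomposition theorem for semismall proper morphisms of smooth projective varieties due to de Cataldo--Migliorini (\emph{The Chow motive of semismall resolutions}, Math.\ Res.\ Lett.\ 11 (2004)) to the restricted Hilbert--Chow map $\pi_{0}: K^{[n]}\to K^{(n)}$. That theorem produces, for any such $f: X\to Y$ stratified by $Y=\coprod Y_{k}$ for which every stratum is ``relevant'' (fiber dimension equal to half the codimension) and for which the monodromy on the top-dimensional irreducible components of the fibers is trivial, a canonical isomorphism
$$
h(X)\cong\bigoplus_{k}h(\tilde{Y}_{k})\otimes\mathbb{L}^{(\dim X-\dim Y_{k})/2}
$$
in the category of Chow motives, where $\tilde{Y}_{k}$ is the normalization of $\overline{Y}_{k}$. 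The task thus reduces to verifying these hypotheses for $\pi_{0}$ and recognizing that $\tilde{Y}_{\lambda,\tau}=K_{\tau}^{(\lambda)}$.

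To check semismallness I would compute fiber dimensions directly. Over a $0$-cycle $\sum\lambda_{i}p_{i}\in K_{\lambda,\tau}^{(n)}$ with the $p_{i}$ pairwise distinct, the fiber of $\pi_{0}$ is $\prod_{i}\mathrm{Hilb}^{\lambda_{i}}(A)_{p_{i}}$, a product of punctual Hilbert schemes which by Brian\c{c}on's irreducibility theorem is irreducible of dimension $\sum_{i}(\lambda_{i}-1)=n-\ell(\lambda)$. The stratum $K_{\lambda,\tau}^{(n)}$ has dimension $2(\ell(\lambda)-1)$, as it parametrizes $\ell(\lambda)$ distinct points of $A$ in a fixed connected component of $\sigma_{\lambda}^{-1}(o_{A})\subset A^{\ell(\lambda)}$ modulo automorphisms of the partition. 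One verifies
$$
\dim K^{[n]}-\dim K_{\lambda,\tau}^{(n)}=2(n-1)-2(\ell(\lambda)-1)=2(n-\ell(\lambda)),
$$
twice the fiber dimension. Hence $\pi_{0}$ is semismall with every stratum relevant, the fibers are irreducible so the monodromy local system on top components is trivial of rank one, and by the description in Section 2 the normalization of $\overline{K_{\lambda,\tau}^{(n)}}$ is $K_{\tau}^{(\lambda)}$. The de Cataldo--Migliorini decomposition then reads exactly as claimed.

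I expect the main technical obstacle to be in promoting the decomposition from cohomology to the Chow level, that is, producing correspondences and proving their mutual orthogonality and completeness in $\mathrm{CH}^{*}(K^{[n]}\times K^{[n]})$. Following de Cataldo--Migliorini, for each pair $(\lambda,\tau)$ I would define $Z_{\lambda,\tau}\subset K^{[n]}\times K_{\tau}^{(\lambda)}$ as the closure of the incidence variety $\pi_{0}^{-1}(K_{\lambda,\tau}^{(n)})\times_{K_{\lambda,\tau}^{(n)}}K_{\tau}^{(\lambda)}$. Orthogonality of the induced projectors $p_{\lambda,\tau}$ for distinct indices rests on a support-plus-dimension argument: the composition $p_{\lambda',\tau'}\circ p_{\lambda,\tau}$ is supported over $\overline{K_{\lambda,\tau}^{(n)}}\cap\overline{K_{\lambda',\tau'}^{(n)}}$, whose preimage has strictly smaller dimension than required for a nonzero class in the relevant codimension. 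Completeness $\sum p_{\lambda,\tau}=\Delta_{K^{[n]}}$ then reduces, via the Ellingsrud--Str\o mme cell decomposition of punctual Hilbert schemes of smooth surfaces, to a verification over the open dense stratum. A delicate point throughout is tracking the torsion label $\tau\in A[e(\lambda)]$ consistently, so that the incidence correspondence over $K_{\lambda,\tau}^{(n)}$ lands in the component of the normalization indexed by $\tau$; the appearance of this torsion index in the final formula reflects precisely the component structure of $\sigma_{\lambda}^{-1}(o_{A})$ recorded in Section 2.
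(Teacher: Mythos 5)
Your proposal is correct in substance, but it reaches the theorem by citing the general motivic decomposition theorem of de Cataldo--Migliorini for semismall maps, whereas the paper reconstructs that machinery by hand in this particular case. Concretely, the paper defines the correspondences $\widehat{\Theta}_{\tau}^{\lambda}=(K_{\tau}^{(\lambda)}\times_{K^{(n)}}K^{[n]})_{\mathrm{red}}$, proves the orthogonality relation $^{t}\widehat{\Theta}_{\tau}^{\lambda}\circ\widehat{\Theta}_{\tau'}^{\lambda'}=\delta_{(\lambda,\tau)(\lambda',\tau')}d_{\lambda,\tau}\Delta_{K_{\tau}^{(\lambda)}}$ by the same support-plus-dimension count you sketch, obtains $d_{\lambda,\tau}\neq 0$ from the nondegeneracy of the local intersection forms supplied by the BBD decomposition theorem applied to the semismall map $\pi_{0}$, and proves completeness $\sum_{\lambda,\tau}\Delta_{\lambda,\tau}=\Delta_{K^{[n]}}$ by passing, via the Corti--Hanamura isomorphism, to the induced endomorphism of $R\pi_{0*}\mathbb{Q}_{K^{[n]}}[2n-2]$, which is the identity, and using that the cycle is supported on the $(2n-2)$-dimensional fiber product $K^{[n]}\times_{K^{(n)}}K^{[n]}$. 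Your verifications (semismallness with all strata relevant via Brian\c{c}on's irreducibility theorem, trivial monodromy since the fibers are irreducible, identification of the normalization of $\overline{K}_{\lambda,\tau}^{(n)}$ with $K_{\tau}^{(\lambda)}$) are exactly the inputs needed, and the twist $(\dim K^{[n]}-\dim K_{\lambda,\tau}^{(n)})/2=n-\ell_{\lambda}$ comes out right; when invoking de Cataldo--Migliorini you should also note that the normalized stratum closures must have Chow motives, i.e. at worst quotient singularities, which holds here because $K_{\tau}^{(\lambda)}=K_{\tau}^{\lambda}/\mathfrak{G}_{\lambda}$ with $K_{\tau}^{\lambda}$ smooth projective. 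The only inaccurate point is your closing remark that completeness ``reduces to a verification over the open dense stratum'': agreement over the dense stratum only pins down $\sum p_{\lambda,\tau}$ modulo $(2n-2)$-dimensional cycles supported over the boundary, and the genuine argument (both in de Cataldo--Migliorini and in the paper) requires the nondegeneracy of the intersection form on \emph{every} relevant stratum, i.e. the decomposition theorem, realized through the Borel--Moore/Corti--Hanamura identification; since in your route this is internal to the cited theorem, it does not invalidate your proof, but it would be a gap if you tried to prove completeness that way directly.
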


Roughly speaking, the Chow motive of a generalized Kummer variety is identified with a direct sum of proper Tate shifts of the Chow motives of quotients of projective commutative algebraic subgroup of self-products of the underlying abelian surface. The isomorphism in Theorem \ref{thm1.1} is realized by explicitly constructing the decomposition of the diagonal $\Delta_{K^{[n]}}$ into a sum of orthogonal idempotents, each of which corresponds to a direct summand. A key point for the proof is to consider the topological realization of the correspondences and to make use of the classical decomposition theorem.

It turns out that the Chow motive of the product $A\times K^{[n]}$ has a more elegant form.

\begin{corollary}
There is a canonical isomorphism of Chow motives
$$
h(A\times K^{[n]})\cong\bigoplus_{\lambda\in P(n)}\bigoplus_{\tau\in A[e(\lambda)]}h(A^{(\lambda)})\otimes\mathbb{L}^{n-\ell_{\lambda}},
$$
where $\mathbb{L}$ is the Lefschetz motive.
\end{corollary}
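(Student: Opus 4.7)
The plan is to deduce the corollary directly from Theorem~\ref{thm1.1} by establishing, for each $\lambda\in P(n)$ and each $\tau\in A[e(\lambda)]$, a canonical isomorphism
$$
h(A)\ot h(K_\tau^{(\lambda)}) \cong h(A^{(\lambda)}).
$$
Applying $h(A)\ot(-)$ to the decomposition of Theorem~\ref{thm1.1} and invoking K\"unneth then gives the corollary. Since $K_\tau^{(\lambda)}$ is by construction a torsion translate of $K_o^{(\lambda)}$, translation canonically identifies $h(K_\tau^{(\lambda)})\cong h(K_o^{(\lambda)})$, so I may reduce to the case $\tau=o$.

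Writing $\ell:=\ell_\lambda$, I would realize $K_o^{(\lambda)}$ as the quotient $B_\lambda/S_\lambda$, where $B_\lambda:=\ker\bigl(s_\lambda: A^\ell\lra A\bigr)$ with $s_\lambda(x_1,\ldots,x_\ell):=\sum_i \lambda_i x_i$, and $S_\lambda=\prod_i S_{a_i}$ permutes coordinates of equal weight; correspondingly $A^{(\lambda)}=A^\ell/S_\lambda$ for the same action. The crucial step is to build an $S_\lambda$-equivariant isogeny
$$
\phi: B_\lambda\times A \lra A^\ell, \qquad \phi(b,a):=b+(a,a,\ldots,a),
$$
where $S_\lambda$ acts trivially on the factor $A$ and in the natural permutation way on the remaining factors. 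Equivariance is immediate because the diagonal $(a,\ldots,a)$ is fixed by every permutation; a direct computation shows that $\phi$ is surjective with kernel $\{(-(a,\ldots,a),a):a\in A[n]\}$ of order $n^4$, hence an isogeny of abelian varieties.

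Since isogenies of abelian varieties induce isomorphisms on rational Chow motives, $\phi$ produces a canonical $S_\lambda$-equivariant isomorphism $h(B_\lambda)\ot h(A)\cong h(A^\ell)$. Taking $S_\lambda$-invariants, which commutes with tensoring by the trivial representation $h(A)$, yields
$$
h(K_o^{(\lambda)})\ot h(A) \,=\, h(B_\lambda)^{S_\lambda}\ot h(A) \,\cong\, h(A^\ell)^{S_\lambda} \,=\, h(A^{(\lambda)}),
$$
completing the argument. The one point to watch is the equivariance of the splitting: a Bezout-style section of $s_\lambda$ need not be $S_\lambda$-equivariant in general, and the diagonal embedding sidesteps this at the mild cost of landing as multiplication-by-$n$ on $A$ rather than the identity --- harmless since we only need an isogeny.
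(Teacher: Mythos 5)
Your overall strategy (tensor Theorem \ref{thm1.1} with $h(A)$ and identify $h(A)\otimes h(K_{\tau}^{(\lambda)})$ with $h(A^{(\lambda)})$ via a diagonal-translation map) is sound, but as written the key construction conflates $\ker(s_{\lambda})$ with its identity component, and this matters exactly when $e(\lambda)>1$, i.e.\ exactly in the cases where the index $\tau$ in the corollary is nontrivial. Your $B_{\lambda}:=\ker(s_{\lambda})$ is the paper's $K^{\lambda}$, which has $e(\lambda)^{4}$ connected components, $K^{\lambda}=\coprod_{\tau\in A[e(\lambda)]}K_{\tau}^{\lambda}$, whereas $K_{o}^{(\lambda)}$ is the $\mathfrak{G}_{\lambda}$-quotient of the identity component $K_{o}^{\lambda}$ alone. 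So $B_{\lambda}/S_{\lambda}=\coprod_{\tau}K_{\tau}^{(\lambda)}\neq K_{o}^{(\lambda)}$, the product $B_{\lambda}\times A$ is not an abelian variety, and the step ``hence an isogeny of abelian varieties'' does not literally apply. Worse, if one runs your argument with the full kernel, each component $K_{\tau}^{\lambda}\times A$ maps finitely and surjectively onto $A^{\ell}$ with degree $(n/e(\lambda))^{4}$, so one gets $h(B_{\lambda})\otimes h(A)\cong h(A^{\ell})^{\oplus e(\lambda)^{4}}$ rather than $h(A^{\ell})$: the sum over $\tau\in A[e(\lambda)]$ in the corollary exists precisely to absorb those $e(\lambda)^{4}$ components, and with your identification the count is off by that factor. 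Correspondingly, your displayed equality $h(K_{o}^{(\lambda)})\otimes h(A)=h(B_{\lambda})^{S_{\lambda}}\otimes h(A)$ fails for $e(\lambda)>1$, and the kernel of $\phi$ on the identity component is $A[n/e(\lambda)]$, of order $(n/e(\lambda))^{4}$, not $n^{4}$.

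The slip is repairable, and the repaired argument is essentially the paper's proof transported upstairs. Take $B_{\lambda}:=K_{o}^{\lambda}=\{\sum_{i}(\lambda_{i}/e(\lambda))x_{i}=0\}$; then $\phi(b,a)=b+(a,\ldots,a)$ is a $\mathfrak{G}_{\lambda}$-equivariant homomorphism of abelian varieties, surjective by a dimension count, with finite kernel $\{(-(a,\ldots,a),a):a\in A[n/e(\lambda)]\}$, hence an isogeny. Isogeny invariance of rational Chow motives is indeed true (e.g.\ via the invertibility of $\Gamma_{[m]}$ coming from the Deninger--Murre decomposition), but you should justify or cite it; it is compatible with the $\mathfrak{G}_{\lambda}$-projectors, so $h(K_{o}^{(\lambda)})\otimes h(A)\cong h(A^{(\lambda)})$ follows. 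For $\tau\neq o$, your translation reduction uses a choice of $\tau'\in A[n]$ with $(n/e(\lambda))\tau'=\tau$; to obtain the asserted \emph{canonical} isomorphism you must note that different choices differ by translations by points of $A[n/e(\lambda)]$, which act trivially on Chow motives \cite{H88} --- the same input the paper invokes. With these corrections your route agrees, up to working equivariantly on $A^{\ell}$ rather than on the quotients, with the paper's argument, which takes the map $A\times K_{\tau}^{(\lambda)}\rightarrow A^{(\lambda)}$, $(x,z)\mapsto t_{x}z$, identifies it as the quotient by the free action $\sigma\cdot(x,z)=(x-\sigma,t_{\sigma}z)$ of $A[n/e(\lambda)]$, and concludes by the triviality of torsion translations on Chow groups.
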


This corollary refines a formula in \cite{GS93} at the level of Hodge structures.

We provide two applications of the above results to the study of algebraic cycles on generalized Kummer varieties. The first one is to verify the Hodge conjecture for products of generalized Kummer varieies, by making use of the motivation principle for Hodge cycles proved in \cite{Ar06}.

\begin{theorem}
Let $A_{i} (1\leq i\leq r)$ be complex abelian surfaces and $K_{i}^{[n_{i}]}$ be the $n_{i}$-th generalized Kummer variety associated to $A_{i}$. Then the Hodge conjecture holds for the product $K_{1}^{[n_{1}]}\times\cdots\times K_{r}^{[n_{r}]}$.
\end{theorem}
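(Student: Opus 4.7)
My plan is to combine Theorem~\ref{thm1.1} with André's motivation principle for Hodge cycles, reducing the Hodge conjecture for $\prod_{i=1}^{r} K_i^{[n_i]}$ to an instance of the Hodge conjecture on an auxiliary abelian variety built from the $A_i$, which is then handled using the Lefschetz standard conjecture for abelian varieties.

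Applying Theorem~\ref{thm1.1} to each factor and invoking the K\"unneth decomposition of Chow motives, one first obtains a canonical isomorphism
\[
h\Bigl(\prod_{i=1}^{r} K_i^{[n_i]}\Bigr) \;\cong\; \bigoplus_{\lambda_\bullet,\,\tau_\bullet} \Bigl(\bigotimes_{i=1}^{r} h(K_{\tau_i}^{(\lambda_i)})\Bigr)\otimes \mathbb{L}^{\sum_i (n_i - \ell(\lambda_i))},
\]
with $\lambda_i \in P(n_i)$ and $\tau_i \in A_i[e(\lambda_i)]$. Each $K_{\tau_i}^{(\lambda_i)}$ is a finite quotient of a torsion translate of an abelian subvariety of a power of $A_i$; averaging over the finite group action exhibits $h(K_{\tau_i}^{(\lambda_i)})$ as a direct summand of the motive of an abelian variety. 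Taking tensor products, $h(\prod_i K_i^{[n_i]})$ becomes a direct summand of a finite sum of Tate twists of $h(B)$ for some abelian variety $B$ assembled from the $A_i$.

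The embedding and projection realizing this direct summand are realized by algebraic correspondences (the idempotents produced in the proof of Theorem~\ref{thm1.1}), so the Hodge conjecture for $\prod_i K_i^{[n_i]}$ reduces to showing that every Hodge class on $B$ lying in the image of the relevant projector is algebraic: a Hodge class $z$ on $\prod_i K_i^{[n_i]}$ lifts to a Hodge class $\widetilde{z}$ on $B$, and once $\widetilde{z}$ is algebraic, so is $z$ via the projector.

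By the motivation principle of André [Ar06], every Hodge class on the abelian variety $B$ is motivated, i.e.\ can be written as $p_{B*}(\alpha \cup \ast_L \beta)$ for algebraic classes $\alpha, \beta$ on $B \times Y$, with $Y$ a smooth projective variety in the tensor subcategory generated by abelian varieties and $\ast_L$ the Hodge $\ast$-operator attached to a polarization $L$ on $B \times Y$. The Lefschetz standard conjecture, known by Lieberman for abelian varieties, ensures that $\ast_L$ on $B \times Y$ is induced by an algebraic correspondence whenever $Y$ is also abelian, so $\alpha \cup \ast_L \beta$ is algebraic and hence $\widetilde{z}$ is algebraic on $B$. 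The main technical point, which I expect to be the principal obstacle in a detailed write-up, is verifying that in André's construction the auxiliary variety $Y$ can indeed be chosen inside the tensor subcategory generated by the $A_i$, so that $B \times Y$ remains an abelian variety to which the Lefschetz standard conjecture applies; the rest of the argument is then a formal bookkeeping of correspondences.
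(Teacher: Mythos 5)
Your reduction step is essentially the paper's: the Chow-motivic decomposition (Theorem \ref{thm1.1}/Corollary \ref{cor2.8}) exhibits the motive of $\prod_i K_i^{[n_i]}$ as a direct summand, via algebraic projectors, of a sum of Tate twists of the motive of an abelian variety $B$ built from the $A_i$ (this is the content of Lemma \ref{lem3.2}, ``motivated by a product of abelian surfaces''). The genuine gap is in your final step, where you try to \emph{prove} the Hodge conjecture for $B$ by combining Andr\'e's motivation theorem with Lieberman's Lefschetz standard conjecture for abelian varieties. That argument, as written, would prove the Hodge conjecture for \emph{every} abelian variety: nothing in it uses the special shape of $B$. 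Since the Hodge conjecture for abelian varieties is a well-known open problem (e.g.\ Weil classes on abelian fourfolds), the argument cannot be correct, and the failure is precisely at the point you flag as ``the main technical point'': Andr\'e's theorem (which is \cite{An96}, not \cite{Ar06}) does \emph{not} provide witnesses $\alpha,\beta$ on $B\times Y$ with $Y$ chosen so that $B\times Y$ is abelian, or more generally so that the Lefschetz involution $\ast_L$ occurring in the motivated expression is known to be algebraic; the auxiliary varieties arise from Deligne-type deformation arguments (Principle B through families of abelian varieties of split Weil type), and the passage from ``motivated'' to ``algebraic'' requires the Lefschetz standard conjecture for those products, which is unknown. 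If the choice you want were possible, $[RM08]$-type theorems would be superfluous and the Hodge conjecture for abelian varieties would be settled.

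The paper avoids this by using a different final ingredient: Arapura's transfer principle (Theorem \ref{thm3.1}, which takes as an \emph{input} the validity of the Hodge conjecture for $X$ and all its self-products) together with the known fact that the Hodge conjecture holds for arbitrary products of complex abelian surfaces, due to Ram\'on Mar\'{\i} \cite{RM08}. Your argument is repairable along exactly these lines: the abelian variety $B$ you construct is isogenous to a product of powers of the $A_i$ (each $K_o^{\lambda}$ is isogenous to $A^{\ell_\lambda-1}$), so the statement you need about $B$ and its powers is true --- but it must be quoted from \cite{RM08} (or proved by the Moonen--Zarhin-type analysis underlying it), not derived from Andr\'e's motivation theorem plus Lieberman. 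With that substitution, and using Theorem \ref{thm3.1} to handle the bookkeeping of correspondences (note the motivated correspondences here are in fact algebraic, coming from the idempotents of Theorem \ref{thm1.1}), your proof becomes the paper's proof.
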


The other one is to show that all numerically trivial 1-cycles on arbitrary products of generalized Kummer varieties are smash-nilpotent. This is achieved by combining Theorem \ref{thm1.1} with recent results on 1-cycles on abelian varieties in \cite{Seb13}.

\begin{theorem}
Let $A_{i} (1\leq i\leq r)$ be abelian surfaces and $K_{i}^{[n_{i}]}$ be the $n_{i}$-th generalized Kummer variety associated to $A_{i}$. Then any numerically trivial 1-cycle on the product $K_{1}^{[n_{1}]}\times\cdots\times K_{r}^{[n_{r}]}$ is smash-nilpotent.
\end{theorem}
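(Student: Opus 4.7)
Write $X := K_{1}^{[n_{1}]}\times\cdots\times K_{r}^{[n_{r}]}$ and let $\alpha\in CH_{1}(X)_{\QQ}$ be numerically trivial. The plan is to use Theorem~\ref{thm1.1} to split $\alpha$ into contributions that live on (quotients of) abelian varieties, and then invoke the results of \cite{Seb13} and of Voisin on smash-nilpotence.

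First I would apply Theorem~\ref{thm1.1} in each factor and take tensor products, obtaining a canonical decomposition
\[
h(X)\;\cong\;\bigoplus_{I} h(Z_{I})\otimes\mathbb{L}^{m_{I}},\qquad Z_{I}:=K_{\tau_{1}}^{(\lambda_{1})}\times\cdots\times K_{\tau_{r}}^{(\lambda_{r})},\qquad m_{I}:=\sum_{i=1}^{r}\bigl(n_{i}-\ell(\lambda_{i})\bigr)\ge 0,
\]
indexed by tuples $I=((\lambda_{1},\tau_{1}),\ldots,(\lambda_{r},\tau_{r}))$. Each $Z_{I}$ is a product of quotients of abelian varieties, hence itself of the form $B_{I}/G_{I}$ for an abelian variety $B_{I}$ and a finite group $G_{I}$. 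Since the isomorphism is realized by orthogonal idempotent correspondences, passing to $CH_{1}$ produces
\[
CH_{1}(X)_{\QQ}\;=\;\bigoplus_{I} CH_{1-m_{I}}(Z_{I})_{\QQ},
\]
and this splitting preserves both numerical equivalence and smash-nilpotence (both being stable under correspondences). Only the indices with $m_{I}\in\{0,1\}$ contribute, so I am reduced to proving that every numerically trivial $0$- or $1$-cycle on a quotient $Z=B/G$ of an abelian variety by a finite group is smash-nilpotent.

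For this I would descend along the finite surjective quotient $q:B\to Z$. Because $q_{*}q^{*}=|G|\cdot\operatorname{id}$, the pullback $(q^{N})^{*}:CH_{*}(Z^{N})_{\QQ}\to CH_{*}(B^{N})_{\QQ}$ is injective for every $N$. Given a numerically trivial cycle $\beta\in CH_{*}(Z)_{\QQ}$, the pullback $q^{*}\beta$ is numerically trivial on the abelian variety $B$ and of the same dimension as $\beta$. When $\dim\beta=1$, \cite{Seb13} gives that $q^{*}\beta$ is smash-nilpotent on $B$; when $\dim\beta=0$, the cycle $q^{*}\beta$ is automatically algebraically trivial on the connected smooth projective $B$, hence smash-nilpotent by Voisin's theorem. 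In either case $(q^{*}\beta)^{\otimes N}=0$ in $CH_{*}(B^{N})_{\QQ}$ for some $N$, and the identification $(q^{*}\beta)^{\otimes N}=(q^{N})^{*}(\beta^{\otimes N})$ combined with injectivity of $(q^{N})^{*}$ forces $\beta^{\otimes N}=0$.

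The main point to justify carefully is the compatibility of the motivic splitting supplied by Theorem~\ref{thm1.1} with numerical equivalence and smash-nilpotence; this is formal, since both relations are preserved by any correspondence and in particular by an orthogonal family of projectors. After that the Tate-twist bookkeeping (only $m_{I}\in\{0,1\}$ matters for $1$-cycles), the identification of $\prod_{j}K_{\tau_{j}}^{(\lambda_{j})}$ with a finite quotient of an abelian variety, and the rational descent along $q$ are all routine, so no truly hard technical step remains beyond the inputs of Theorem~\ref{thm1.1}, \cite{Seb13} and Voisin.
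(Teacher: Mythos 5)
Your argument is correct and reaches the same two external inputs as the paper (Sebastian's theorem for numerically trivial 1-cycles on varieties dominated by products of curves, and the fact that numerically trivial 0-cycles are algebraically trivial, hence smash-nilpotent by Voevodsky--Voisin), but the reduction is organized differently. The paper first invokes Lemma \ref{lem3.7} to reduce to $r=2$, then replaces $K_1^{[n_1]}\times K_2^{[n_2]}$ by $A_1\times K_1^{[n_1]}\times A_2\times K_2^{[n_2]}$ using the observation that smash-nilpotence of numerically trivial $m$-cycles descends along surjective morphisms, and only then applies Corollary \ref{cor2.8}, so its building blocks are the symmetric products $A_1^{(\lambda)}\times A_2^{(\lambda')}$, whose rational Chow groups inject into those of $A_1^{\lambda}\times A_2^{\lambda'}$. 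You instead apply Theorem \ref{thm1.1} factorwise to the full $r$-fold product, so your building blocks are the quotients $K_{\tau_1}^{(\lambda_1)}\times\cdots\times K_{\tau_r}^{(\lambda_r)}$ of abelian varieties (each $K_{\tau}^{\lambda}$ being a torsion translate of the abelian variety $K_{o}^{\lambda}$), and you perform the finite descent along $q\colon B\to B/G$ explicitly via $q_*q^*=|G|\cdot\mathrm{id}$ and $(q^{N})^*(\beta^{\times N})=(q^*\beta)^{\times N}$; this lets you dispense with Lemma \ref{lem3.7} and the surjectivity reduction altogether, at the cost of checking (as you do) that the motivic projectors preserve numerical triviality and smash-nilpotence, which is formal since both are adequate equivalence relations compatible with correspondences with $\QQ$-coefficients, also on quotient varieties, and is implicitly used in the paper as well. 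Your Tate-twist bookkeeping ($CH_{1}$ of $h(Z_I)\otimes\mathbb{L}^{m_I}$ is $CH_{1-m_I}(Z_I)_{\QQ}$, so only $m_I\in\{0,1\}$ contribute) agrees with the paper's; in the end your route is marginally more self-contained, while the paper's trades the somewhat opaque varieties $K_{\tau}^{(\lambda)}$ for honest symmetric products of the abelian surfaces.
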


\emph{Convention}: We work over the field of complex numbers $\mathbb{C}$ unless otherwise stated. We will freely use notations and results from intersection theory. See \cite{F98}.

\medskip\noindent
{\it Acknowledgements}: The author would like to thank Professor Chenyang Xu, Professor Baohua Fu and Professor Kejian Xu gratefully for supports and encouragements. 

This work is partially supported by National Natural Science Foundation of China (No.11201454).

\bigskip

\section{The Chow motive of a generalized Kummer variety}
\bigskip

In this section, we carry out explicit calculations of the Chow motives of generalized Kummer varieties associated to abelian surfaces. The main result is Theorem \ref{thm2.7}.

\subsection{A natural stratification of $K^{[n]}$}

We recollect the basic geometric setup of generalized Kummer varieties and essential combinatorial information involved.

Let $A$ be an abelian surface. The $n$-th symmetric product $A^{(n)}$ has a natural stratification $A^{(n)}=\coprod_{\lambda\in P(n)}A_{\lambda}^{(n)}$, where the stratum $A_{\lambda}^{(n)}$ is the irreducible locally closed smooth subvariety of $A^{(n)}$ consisting of points of the form $\lambda_{1}x_{1}+\cdots+\lambda_{\ell_{\lambda}}x_{\ell_{\lambda}}$ with $x_{i}\in A$ and $x_{i}\neq x_{j}$ for every $i\neq j$. Let $\overline{A}_{\lambda}^{(n)}$ be the closure of the stratum $A_{\lambda}^{(n)}$. Let $A^{(\lambda)}:=A^{(a_{1})}\times\cdots\times A^{(a_{r})}$ be the quotient of $A^{\ell_{\lambda}}$ by the natural action of the finite group $\mathfrak{G}_{\lambda}:=\mathfrak{G}_{a_{1}}\times\cdots\times\mathfrak{G}_{a_{r}}$. Then there is a natural $\mathfrak{G}_{\lambda}$-invariant morphism $\lambda: A^{\lambda}\rightarrow A^{(n)}$ with image $\overline{A}_{\lambda}^{(n)}$. This descends to the normalization morphism of the image $\lambda: A^{(\lambda)}\rightarrow A^{(n)}$, denoted by the same symbol. Let $A_{\lambda}^{[n]}$ be the reduced scheme of $\pi^{-1}(A_{\lambda}^{(n)})$. Then $A^{[n]}$ has a natural stratification $A^{[n]}=\coprod_{\lambda\in P(n)}A_{\lambda}^{[n]}$. Let $\overline{A}_{\lambda}^{[n]}$ be the closure of the stratum $A_{\lambda}^{[n]}$. Then $\overline{A}_{\lambda}^{[n]}=(\pi^{-1}(\overline{A}_{\lambda}^{(n)}))_{\text{red}}$. A crucial fact for this stratification is that for each $\lambda\in P(n)$, the morphism $\pi: A_{\lambda}^{[n]}\rightarrow A_{\lambda}^{(n)}$ is a Zariski locally trivial fibration with irreducible fiber admitting a cellular decomposition \cite{deCM02}.

Let $K^{(n)}:=\sigma^{-1}(o_{A})$. Then it is an irreducible quotient variety and hence normal. Let $j: K^{(n)}\rightarrow A^{(n)}$ be the natural closed immersion. Then we have the following commutative diagram with each square cartesian
$$
\xymatrix{
K^{[n]} \ar[d]_{j} \ar[r]^{\pi_{0}} & K^{(n)} \ar[d]_{} \ar[r]^{\sigma} & o_{A} \ar[d]^{} \\
A^{[n]} \ar[r]^{\pi} & A^{(n)} \ar[r]^{\sigma} & A.}
$$
The stratification of $A^{(n)}$ given above induces a decomposition of $K^{(n)}$ into a disjoint union of locally closed smooth subvarieties:
$$
K^{(n)}=\coprod_{\lambda\in P(n)}K_{\lambda}^{(n)},
$$
where $K_{\lambda}^{(n)}=A_{\lambda}^{(n)}\cap K^{(n)}$ is the scheme-theoretic intersection.  

A feature of the above decomposition is that not all the pieces $K_{\lambda}^{(n)}$ are irreducible, although they are all smooth. Now we want to give all the irreducible components of $K_{\lambda}^{(n)}$. For any $\lambda\in P(n)$, define $e(\lambda):=gcd\{\lambda_{1},\ldots,\lambda_{\ell_{\lambda}}\}$. Now define a homomorphism of abelian varieties
$s: A^{a_{1}}\times\cdots\times A^{a_{r}}\rightarrow A$
given by
$$
(x_{1},\ldots, x_{\ell_{\lambda}})\mapsto\sum_{i=1}^{\ell_{\lambda}}\lambda_{i}x_{i}.
$$
Let $K^{\lambda}=\text{Ker}(s)$. Then $K^{\lambda}$ is a smooth projective commutative algebraic group with exactly $e(\lambda)^{4}$ irreducible components. Then
$$K^{\lambda}=\coprod_{\tau\in A[e(\lambda)]}K_{\tau}^{\lambda},$$
where
$K_{\tau}^{\lambda}=\{(x_{1},\ldots,x_{\ell_{\lambda}})\in A^{\ell_{\lambda}}|\sum_{i=1}^{\ell_{\lambda}}\frac{\lambda_{i}}{e(\lambda)}x_{i}=\tau\}$.
Note that $K_{o}^{\lambda}$ is the connected component of the identity of $K^{\lambda}$.  For any chosen $\tau'\in A[n]$ such that $\frac{n}{e(\lambda)}\cdot\tau'=\tau$, the translation morphism $t_{\tau'}$ on $A$ induces the translation $t_{\tau'}$ on $A^{\ell_{\lambda}}$ mapping $(x_{1},\ldots,x_{\ell_{\lambda}})$ to $(x_{1}+\tau',\ldots,x_{\ell_{\lambda}}+\tau')$, which restricts to $K^{\lambda}$ and maps $K_{o}^{\lambda}$ isomorphically to $K_{\tau}^{\lambda}$.
The action of $\mathfrak{G}_{\lambda}$ on $A^{\lambda}$ restricts to $K^{\lambda}$ and this induced action is compatible with the irreducible components decomposition, that is, $\mathfrak{G}_{\lambda}$ acts on each $K_{\tau}^{\lambda}$ with quotient, denoted by, $K_{\tau}^{(\lambda)}$. Then one has a natural decomposition
$$
K^{(\lambda)}=\coprod_{\tau\in A[e(\lambda)]}K_{\tau}^{(\lambda)}.
$$
The natural morphism $\lambda: A^{(\lambda)}\rightarrow A^{(n)}$ induces the morphism $\lambda: K^{(\lambda)}\rightarrow K^{(n)}$. The image of $K_{\tau}^{(\lambda)}$ under $\lambda$ is denoted by $\overline{K}_{\lambda,\tau}^{(n)}$. The induced morphism $\lambda_{\tau}: K_{\tau}^{(\lambda)}\rightarrow \overline{K}_{\lambda,\tau}^{(n)}$ is the normalization. Then we have a commutative diagram
$$
\xymatrix{
  K_{\tau}^{(\lambda)} \ar[d]_{} \ar[r]^{\lambda} & \overline{K}_{\lambda,\tau}^{(n)} \ar[d]_{} \ar[r]^{} & K^{(n)} \ar[d]^{} \\
  A^{\lambda} \ar[r]^{} & \overline{A}_{\lambda}^{(n)} \ar[r]^{} & A^{(n)}.}
$$
Let $K_{\lambda,\tau}^{(n)}=A_{\lambda}^{(n)}\cap\overline{K}_{\lambda,\tau}^{(n)}$. Then the irreducible components of $K_{\lambda}^{(n)}$ are exactly $K_{\lambda,\tau}^{(n)}$ with each $\tau\in A[e(\lambda)]$. Therefore, we have a natural stratification of $K^{(n)}$ as follows:
$$
K^{(n)}=\coprod_{\lambda\in P(n)}\coprod_{\tau\in A[e(\lambda)]}K_{\lambda,\tau}^{(n)}.
$$
Let $\pi_{0}: K^{[n]}\rightarrow K^{(n)}$ be the (restricted) Hilbert-Chow morphism. Let $K_{\lambda,\tau}^{[n]}$ be the underlying reduced scheme of $\pi_{0}^{-1}(K_{\lambda,\tau}^{(n)})$. Then the morphism $\pi_{0}: K_{\lambda,\tau}^{[n]}\rightarrow K_{\lambda,\tau}^{(n)}$ is a Zariski locally trivial fibration, since it is a base change of $\pi: A_{\lambda}^{[n]}\rightarrow A_{\lambda}^{(n)}$. Let $\overline{K}_{\lambda,\tau}^{[n]}$ be the closure of $K_{\lambda,\tau}^{[n]}$. Then $\overline{K}_{\lambda,\tau}^{[n]}$ coincides with the underlying reduced scheme of $\pi_{0}^{-1}(\overline{K}_{\lambda,\tau}^{(n)})$. Finally, we reach at a natural stratification of $K^{[n]}$:
$$
K^{[n]}=\coprod_{\lambda\in P(n)}\coprod_{\tau\in A[e(\lambda)]}K_{\lambda,\tau}^{[n]}.
$$

Note that there is a natural partial order on the set of partitions. For $\lambda, \mu\in P(n)$, $\lambda\geq\mu$ if there is a decomposition $\{I_{1},\ldots,I_{\ell_{\mu}}\}$ of the set $\{1,\ldots,\ell_{\lambda}\}$ such that $\mu_{1}=\sum_{i\in I_{1}}\lambda_{i}$, \ldots, $\mu_{\lambda}=\sum_{i\in I_{\ell_{\mu}}}\lambda_{i}$. Note that $\lambda\geq\mu$ if and only if $A_{\mu}^{(n)}\subseteq \overline{A}_{\lambda}^{(n)}$. So, for any $\lambda\in P(n)$, $A_{\geq\lambda}^{(n)}:=\coprod_{\mu\geq\lambda}A_{\mu}^{(n)}$ is an open subset of $A^{(n)}$ and $A_{\lambda}^{(n)}$ is a closed subset of $A_{\geq\lambda}^{(n)}$. Then we have an open subset $A_{\geq\lambda}^{[n]}$ of $A^{[n]}$ by base change followed by reduction. Therefore, we have open subsets $K_{\geq\lambda}^{(n)}:=A_{\geq\lambda}^{(n)}\cap K^{(n)}$ of $K^{(n)}$ and $K_{\geq\lambda}^{[n]}:=A_{\geq\lambda}^{[n]}\cap K^{[n]}$ of $K^{[n]}$. Moreover, $K_{\lambda,\tau}^{(n)}$ and $K_{\lambda,\tau}^{[n]}$ are closed subvarieties of $K_{\geq\lambda}^{(n)}$ and $K_{\geq\lambda}^{[n]}$ of dimension $n-2+\ell_{\lambda}$ respectively. For any close point $z\in K_{\tau,\text{sm}}^{(\lambda)}\cong K_{\lambda,\tau}^{(n)}$, let $F_{\lambda,\tau}=(\pi_{0}^{-1}(z))_{\text{red}}\subseteq K_{\lambda,\tau}^{[n]}\subseteq K_{\geq\lambda}^{[n]}$, which is closed of dimension $n-\ell_{\lambda}$.

\subsection{Explicit calculations}

\begin{lemma}\label{lem2.1}
For any partition $\lambda\in P(n)$ and $\tau\in A[e(\lambda)]$, the intersection number
$$
\int_{K_{\geq\lambda}^{[n]}}[F_{\lambda,\tau}]\cdot [K_{\lambda,\tau}^{[n]}]
$$
is nonzero.
\end{lemma}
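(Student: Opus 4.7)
The plan is to recognize the intersection as the diagonal entry of the refined intersection form attached to the semi-small resolution $\pi_0 : K^{[n]} \to K^{(n)}$, and invoke the nondegeneracy of that form.

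First I would verify that $\pi_0$ is semi-small with every $K_{\lambda,\tau}^{(n)}$ a relevant stratum. The stratum has dimension $2(\ell_\lambda - 1)$, so codimension $2(n - \ell_\lambda)$ in $K^{(n)}$, while the fiber $F_{\lambda,\tau}$ has dimension $n - \ell_\lambda = \tfrac{1}{2}\,\mathrm{codim}(K_{\lambda,\tau}^{(n)})$; semi-smallness of $\pi_0$ is inherited from the corresponding property of $\pi : A^{[n]} \to A^{(n)}$ via the Cartesian diagram preceding the lemma. By the same dimension count, $K_{\lambda,\tau}^{[n]}$ is a top-dimensional (hence relevant) irreducible component of $\pi_0^{-1}(K_{\lambda,\tau}^{(n)})$, and as $F_{\lambda,\tau}$ is irreducible it is the unique relevant component over this stratum.

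I would then invoke the nondegeneracy of the refined intersection form for semi-small morphisms (due to de Cataldo-Migliorini). For each relevant stratum $S$, this form pairs the relevant components $V_i$ of $\pi_0^{-1}(S)$ with the top-dimensional components $C_j$ of a generic fiber; the entry $I(V_i, C_j)$ is the local intersection number of $V_i$ and $C_j$ in $K^{[n]}$, and the theorem asserts that the resulting matrix is nondegenerate. In our situation the matrix is $1 \times 1$ with sole entry $[K_{\lambda,\tau}^{[n]}] \cdot [F_{\lambda,\tau}]$, which must therefore be nonzero. Since $F_{\lambda,\tau}$ is compact and lies in $K_{\geq\lambda}^{[n]}$, the local intersection number coincides with the global integral appearing in the statement of the lemma, and the lemma follows.

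The main technical point is to identify the global intersection on $K_{\geq\lambda}^{[n]}$ with the local refined intersection form entry, which is accomplished by an \'etale-local trivialization of $\pi_0$ near $F_{\lambda,\tau}$. Applying the center-of-gravity splitting $A^{[\lambda_i]}_{x_i} \cong A_{x_i} \times (\mathbb{C}^2)^{[\lambda_i]}_0$ factor-by-factor, compatibly with the sum-zero constraint cutting out $K^{[n]}$ from $A^{[n]}$, one obtains a local product $K_{\geq\lambda}^{[n]} \cong U \times \prod_i (\mathbb{C}^2)^{[\lambda_i]}_0$, under which the intersection reduces by K\"unneth to
$$
\prod_{i=1}^{\ell_\lambda} \int_{(\mathbb{C}^2)^{[\lambda_i]}_0} [\mathrm{Hilb}^{\lambda_i}(\mathbb{C}^2, 0)]^{2},
$$
each factor being nonzero by the same semi-small nondegeneracy theorem applied to the resolution $(\mathbb{C}^2)^{[\lambda_i]}_0 \to (\mathbb{C}^2)^{(\lambda_i)}_0$ with its irreducible exceptional fiber.
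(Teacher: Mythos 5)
Your argument is essentially the paper's own: the paper also deduces the lemma directly from the decomposition theorem for the semi-small morphism $\pi_{0}\colon K^{[n]}\to K^{(n)}$, noting that every stratum $K_{\lambda,\tau}^{(n)}$ is relevant and that irreducibility of the fiber $F_{\lambda,\tau}$ and of the preimage of the stratum makes the refined intersection form a $1\times 1$ matrix whose sole entry is $\int_{K_{\geq\lambda}^{[n]}}[F_{\lambda,\tau}]\cdot[K_{\lambda,\tau}^{[n]}]$, hence nonzero by nondegeneracy. Your final paragraph reducing the number, via the local product structure of the Hilbert--Chow morphism, to self-intersections of punctual Hilbert schemes is extra detail (in the spirit of de Cataldo--Migliorini) that the paper does not carry out, but it does not change the approach.
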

In fact, this is an immediate consequence of the decomposition theorem due to Beilinson, Bernstein and Deligne \cite{BBD82}, recalling that the morphism $\pi_{0}: K^{[n]}\rightarrow K^{(n)}$ is semi-small, actually a crepant resolution.

Here we only give the statement of the decomposition theorem for semi-small morphism, in which case the theorem turns out to be simple and explicit. Now we recall some relevant notations. A proper surjective morphism $f: X\rightarrow S$ of complex algebraic varieties is semi-small (resp. small) if for $\delta\in\mathbb{N}$, defining $S_{f}^{\delta}:=\{s\in S|\text{dim}f^{-1}(s)=\delta\}$, then $\text{dim}S-\text{dim}S_{f}^{\delta}\geq 2\delta$ for any $\delta\geq 0$ (resp. $\text{dim}S-\text{dim}S_{f}^{\delta}>2\delta$ for any $\delta>0$); equivalently, the irreducible components of $X\times_{S}X$ have dimension at most $\text{dim}X$ (resp., the $\text{dim}X$-dimensional irreducible components of $X\times_{S}X$ dominates $S$). For any semi-small morphism $f: X\rightarrow S$, there exists a stratification: $S=\coprod_{\alpha\in\Xi}S_{\alpha}$, where each $S_{\alpha}$ is a locally closed smooth subvariety of $S$ and the induced morphism $f^{-1}(S_{\alpha})\rightarrow S_{\alpha}$ is a locally topologically trivial fibration. We say $\Omega:=\{\alpha\in\Xi|\text{dim}S-\text{dim}S_{\alpha}=2\text{dim}f^{-1}(s)\ \text{for any}\ s\in S_{\alpha}\}$ is the set of relevant strata for $f$. For examples, $\pi: A^{[n]}\rightarrow A^{(n)}$ and $\pi_{0}: K^{[n]}\rightarrow K^{(n)}$ are both semi-small with stratifications $A^{(n)}=\coprod_{\lambda\in P(n)}$ and $K^{(n)}=\coprod_{\lambda\in P(n)}\coprod_{\tau\in A[e(\lambda)]}K_{\lambda,\tau}^{(n)}$ respectively and in both cases, all strata are relevant. For a complex algebraic variety $X$ and a local system $\mathcal{L}$ on some Zariski open subset of its smooth locus $X_{\text{sm}}$, we will denote by $IC_{X}(\mathcal{L})$ the intersection cohomology complex associated to $\mathcal{L}$.

\begin{theorem}\label{thm2.2}(\cite{BBD82})
Let $X$ be a quotient variety of dimension $d$ and $f: X\rightarrow S$ be a proper surjective semi-small morphism of complex algebraic varieties. Then there is a canonical isomorphism
$$
Rf_{*}\mathbb{Q}_{X}[d]\cong\bigoplus_{\alpha\in\Omega}IC_{\overline{S}_{\alpha}}(\mathcal{L}_{\alpha})
$$
in the bounded derived category $D_{cc}^{b}(S)$ of constructible complexes of sheaves of $\mathbb{Q}$-vector spaces on $S$, where $\Omega$ is the set of relevant strata for $f$ and each $\mathcal{L}_{\alpha}$ is the semi-simple local system on $S_{\alpha}$ defined by the monodromy action on the maximal dimensional irreducible components of the fibers of $f$ over $S_{\alpha}$.
\end{theorem}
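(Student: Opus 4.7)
The plan is to deduce this statement by specializing the general BBD decomposition theorem to $Rf_{*}\mathbb{Q}_{X}[d]$, so the content of the proof is really twofold: first, to verify that this particular pushforward is already a perverse sheaf (so no shifts appear in the decomposition), and second, to pin down which simple perverse constituents occur and what their underlying local systems are.

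First I would observe that since $X$ is a quotient variety of dimension $d$, it is a $\mathbb{Q}$-homology manifold, so that $\mathbb{Q}_{X}[d]\cong IC_{X}$ is a self-dual perverse sheaf on $X$. Applying the general BBD decomposition theorem to the proper map $f$ therefore yields an abstract splitting
$$
Rf_{*}\mathbb{Q}_{X}[d]\cong\bigoplus_{i}IC_{\overline{Z}_{i}}(\mathcal{N}_{i})[n_{i}]
$$
in $D_{cc}^{b}(S)$, with $\overline{Z}_{i}\subseteq S$ irreducible closed and $\mathcal{N}_{i}$ a simple local system on a Zariski open subset of the smooth locus of $\overline{Z}_{i}$.

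Next I would show that every shift $n_{i}$ vanishes, equivalently that $Rf_{*}\mathbb{Q}_{X}[d]$ is itself perverse. This is the step where semi-smallness is used, via the support/cosupport conditions of the perverse $t$-structure. Stratifying $S$ by the loci $S_{f}^{\delta}$ of fiber dimension $\delta$, proper base change gives $\mathcal{H}^{k}(Rf_{*}\mathbb{Q}_{X}[d])_{s}=H^{k+d}(f^{-1}(s),\mathbb{Q})$, which vanishes for $k>2\delta-d$. The semi-small inequality $\dim S-\dim S_{f}^{\delta}\geq 2\delta$ then translates directly into the support bound $\dim\mathrm{supp}\,\mathcal{H}^{k}\leq -k$. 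The cosupport condition follows by Verdier duality from the support condition, since $f$ is proper and $\mathbb{Q}_{X}[d]$ is self-dual.

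Once perversity is established, the abstract decomposition has all $n_{i}=0$, and the only remaining task is to match the $\overline{Z}_{i}$ with the relevant strata $\alpha\in\Omega$ and to identify $\mathcal{N}_{i}$ with the stated monodromy local system $\mathcal{L}_{\alpha}$. For this I would use that $IC_{\overline{Z}_{i}}(\mathcal{N}_{i})$ restricted to a point of its open stratum sits in a single cohomological degree determined by $\dim\overline{Z}_{i}$; comparing with $H^{*}(f^{-1}(s),\mathbb{Q})[d]$ and the perversity bound shows that only strata saturating the semi-small inequality (i.e. the relevant ones) contribute, and that the stalk of $\mathcal{L}_{\alpha}$ at $s\in S_{\alpha}$ is exactly the top-dimensional piece $H^{2\dim f^{-1}(s)}(f^{-1}(s),\mathbb{Q})$ spanned by the classes of maximal-dimensional irreducible components, with monodromy inherited from the locally topologically trivial fibration $f^{-1}(S_{\alpha})\to S_{\alpha}$. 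Semi-simplicity of $\mathcal{L}_{\alpha}$ is a consequence of BBD.

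The main obstacle, apart from invoking BBD as a black box, is the perversity check: one must translate the numerical semi-smallness condition into the precise support/cosupport inequalities and then argue that no $IC$-sheaf supported on an irrelevant stratum can appear. The latter is the delicate part, since a priori the abstract BBD decomposition could involve local systems on proper closed subvarieties of a relevant stratum closure, and ruling these out requires the dimension count afforded by saturation of the semi-small inequality together with the fact that the generic stalk of each simple constituent is already forced by the top-degree fiber cohomology.
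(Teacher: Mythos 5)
The paper does not actually prove this statement: it is quoted verbatim as a known result of \cite{BBD82} (in the form standard for semi-small maps, cf.\ de Cataldo--Migliorini), and the only accompanying text is a remark interpreting the isomorphism as the nondegeneracy of the intersection forms attached to the relevant strata. Your proposal, by contrast, supplies the standard derivation of the semi-small refinement from the general decomposition theorem, and it is correct in outline: $\mathbb{Q}_X[d]\cong IC_X$ because a quotient variety is a $\mathbb{Q}$-homology manifold; the support estimate $\dim\mathrm{supp}\,\mathcal{H}^k\leq -k$ follows from proper base change together with $\dim S_f^{\delta}\leq \dim S-2\delta$ (using that $\dim S=\dim X=d$ since a surjective semi-small map is generically finite, a point worth making explicit); the cosupport condition comes from self-duality and properness; and the identification of the summands with relevant strata and monodromy local systems on top-dimensional fiber components is the de Cataldo--Migliorini argument. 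The one step you rightly flag as delicate --- excluding simple constituents supported on proper closed subsets of stratum closures --- is genuinely where the work lies and is usually handled by induction on the open unions of strata, restricting the splitting to each open stratum in turn; as written your sketch gestures at this but does not carry it out. Since the paper treats the theorem as a black box, your proposal adds content rather than diverging from it; what the paper's choice buys is brevity, and what yours buys is a self-contained justification of exactly the form of the theorem (perverse degree zero, only relevant strata, explicit local systems) that the rest of Section 2 depends on.
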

\begin{remark}
For a relevant stratum $S_{\alpha}$ and any $s_{\alpha}\in S_{\alpha}$, then $2\text{dim}f^{-1}(s_{\alpha})=d-\text{dim}S$ by definition. Take a contractible analytic neighborhood $U$ of $s_{\alpha}$ and $U':=f^{-1}(U)$. Let $\{Y_{i}\}$ be the set of maximal $\frac{1}{2}(d+\text{dim}S_{\alpha})$-dimensional irreducible components of $f^{-1}(S_{\alpha}\cap U)$ and $\{F_{i}\}$ be the set of maximal $\frac{1}{2}(d-\text{dim}S_{\alpha})$-dimensional irreducible components of $f^{-1}(s_{\alpha})$. By intersection theory \cite{F98}, there are well-defined $\mathbb{Q}$-valued (refined) intersection numbers $\int_{U'}F_{i}\cdot Y_{j}$. These numbers are independent of the choice of $s_{\alpha}$ and $U$, and are monodromy invariant. Then we get an intersection form associated to any relevant stratum. The meaning of Theorem \ref{thm2.2} is that the intersection form associated to each relevant stratum is nondegenerate.

As we have seen, the morphism $\pi_{0}: K^{[n]}\rightarrow K^{(n)}$ is a semi-small. All strata are relevant and for each stratum $K_{\lambda,\tau}^{(n)}$ and $z\in K_{\lambda,\tau}^{(n)}$, the fiber $\pi_{0}^{-1}(z)$ is irreducible with a cellular structure and $\pi_{0}^{-1}(K_{\lambda,\tau}^{(n)})\cap U$ is irreducible for any contractible analytic neighborhood $U$ of $z$. The intersection form associated to the stratum $K_{\lambda,\tau}^{(n)}$ is determined by the single intersection number $\int_{K_{\geq\lambda}^{[n]}}[F_{\lambda,\tau}]\cdot [K_{\lambda,\tau}^{[n]}]$, which is nonzero by the decomposition theorem.
\end{remark}

Now we construct involved correspondences for our computations of Chow motives. Let $\Theta_{\tau}^{\lambda}$ be the underlying reduced scheme of $K_{\tau}^{\lambda}\times_{K^{(n)}}K^{[n]}$. Since the correspondence $\Theta_{\tau}^{\lambda}$ is invariant under the action of $\mathfrak{G}_{\lambda}$ on the first factor of the product, one can define $\widehat{\Theta}_{\tau}^{\lambda}:=\Theta_{\tau}^{\lambda}/\mathfrak{G}_{\lambda}=(K_{\tau}^{(\lambda)}\times_{K^{(n)}}K^{[n]})_{\text{red}}$.

\begin{lemma}\label{lem2.4}
For $\lambda\in P(n)$, $\tau\in A[e(\lambda)]$ and $\lambda'\in P(n)$, $\tau'\in A[e(\lambda')]$, one has as correspondences
$$
^{t}\widehat{\Theta}_{\tau}^{\lambda}\circ\widehat{\Theta}_{\tau'}^{\lambda'}=\delta_{(\lambda,\tau)(\lambda',\tau')}d_{\lambda,\tau}\Delta_{K_{\tau}^{(\lambda)}}\ \text{in}\  \text{CH}^{\ell_{\lambda}+\ell_{\lambda'}}(K_{\tau'}^{(\lambda')}\times K_{\tau}^{(\lambda)})_{\mathbb{Q}},
$$
where $d_{\lambda,\tau}:=\int_{K^{[n]}}[F_{\lambda,\tau}]\cdot K_{\lambda,\tau}^{[n]}\neq 0$.
\end{lemma}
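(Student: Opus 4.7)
The plan is to unwind the composition via the pushforward--intersection formula and then split according to whether $(\lambda,\tau)=(\lambda',\tau')$. I would first write
$$
{}^{t}\widehat{\Theta}_{\tau}^{\lambda}\circ\widehat{\Theta}_{\tau'}^{\lambda'}=(p_{13})_{*}\bigl(p_{12}^{*}\widehat{\Theta}_{\tau'}^{\lambda'}\cdot p_{23}^{*}\,{}^{t}\widehat{\Theta}_{\tau}^{\lambda}\bigr),
$$
where the $p_{ij}$ are the projections from $K_{\tau'}^{(\lambda')}\times K^{[n]}\times K_{\tau}^{(\lambda)}$. The support of the right hand side lies in the reduced fibre product $\widehat{\Theta}_{\tau'}^{\lambda'}\times_{K^{[n]}}\widehat{\Theta}_{\tau}^{\lambda}$, whose image under $p_{13}$ is contained in $K_{\tau'}^{(\lambda')}\times_{K^{(n)}}K_{\tau}^{(\lambda)}$. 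Because the normalizations $\lambda_{\tau}$ and $\lambda_{\tau'}'$ are finite birational, this image is finite over $\overline{K}_{\lambda,\tau}^{(n)}\cap\overline{K}_{\lambda',\tau'}^{(n)}\subseteq K^{(n)}$.

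\medskip

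For the off--diagonal vanishing, I would exploit the partial order on $P(n)$: any common point of $\overline{K}_{\lambda,\tau}^{(n)}$ and $\overline{K}_{\lambda',\tau'}^{(n)}$ lies in a deeper stratum $\overline{K}_{\mu,\tau''}^{(n)}$ with $\mu\leq\lambda$ and $\mu\leq\lambda'$. Strict monotonicity of $\ell$ under $\leq$ gives $2\ell_{\mu}<\ell_{\lambda}+\ell_{\lambda'}$ whenever $(\lambda,\tau)\neq(\lambda',\tau')$, so the support of the composition has dimension strictly smaller than the expected dimension of the cycle and the composition vanishes. In the diagonal case $(\lambda,\tau)=(\lambda',\tau')$, birationality of $\lambda_{\tau}$ over $K_{\lambda,\tau}^{(n)}$ forces the image of $p_{13}$ onto $\Delta_{K_{\tau}^{(\lambda)}}$ generically, and the same dimension argument rules out top--dimensional contributions from lower strata. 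Hence the composition equals $c\cdot\Delta_{K_{\tau}^{(\lambda)}}$ for some $c\in\mathbb{Q}$.

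\medskip

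To identify $c$, I would apply the composition to a generic class $[z]$ with $z\in K_{\tau,\text{sm}}^{(\lambda)}$. Since $(\widehat{\Theta}_{\tau}^{\lambda})_{*}[z]=[F_{\lambda,\tau}]$ in $\text{CH}_{*}(K^{[n]})$, applying $({}^{t}\widehat{\Theta}_{\tau}^{\lambda})_{*}$ and using the projection formula returns the coefficient $\int_{K^{[n]}}[F_{\lambda,\tau}]\cdot[K_{\lambda,\tau}^{[n]}]=d_{\lambda,\tau}$, which is nonzero by Lemma \ref{lem2.1}. The main obstacle is this last multiplicity step: for $\lambda\neq(1^{n})$ the intersection in the triple product has excess dimension, so a naive transverse intersection argument is unavailable. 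I would handle this by restricting everything to the open, Zariski--locally trivial locus $\pi_{0}:K_{\lambda,\tau}^{[n]}\to K_{\lambda,\tau}^{(n)}$, where the intersection becomes transverse fibrewise and the coefficient $c$ is identified precisely with the refined intersection number appearing in the remark following Theorem \ref{thm2.2}.
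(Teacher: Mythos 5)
Your proposal follows essentially the same route as the paper's proof: the same pushforward--intersection expansion of the composition, the same dimension count over deeper strata to kill the off-diagonal cases, the same identification of the diagonal-case support with $\Delta_{K_{\tau}^{(\lambda)}}$, and the same evaluation on a generic point class $[z]$ via $(\widehat{\Theta}_{\tau}^{\lambda})_{*}[z]=[F_{\lambda,\tau}]$ and the projection formula, identifying the coefficient with $\int_{K^{[n]}}[F_{\lambda,\tau}]\cdot[K_{\lambda,\tau}^{[n]}]\neq 0$ by Lemma \ref{lem2.1}. The only cosmetic difference is that the paper handles the subcase $\lambda=\lambda'$, $\tau\neq\tau'$ by noting the two components are disjoint, whereas you fold it into the uniform stratum dimension count.
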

\begin{proof}
First, suppose that $\lambda\neq\lambda'$. Denote by $p_{ij}$ the projection from $K_{\tau'}^{(\lambda')}\times K^{n}\times K_{\tau}^{(\lambda)}$ to the product of the $i$-th and $j$-th factors. Then by definition of push forward of algebraic cycles, it suffices to show that
$$
\text{dim}p_{13}(p_{12}^{-1}(\widehat{\Theta}_{\tau'}^{\lambda'})\cap p_{23}^{-1}(^{t}\widehat{\Theta}_{\tau}^{\lambda}))<\ell_{\lambda}+\ell_{\lambda'}.
$$
In fact, $\text{dim}p_{13}(p_{12}^{-1}(\widehat{\Theta}_{\tau'}^{\lambda'})\cap p_{23}^{-1}(^{t}\widehat{\Theta}_{\tau}^{\lambda}))=\text{dim}\overline{K}_{\lambda,\tau}^{(n)}\cap\overline{K}_{\lambda',\tau'}^{(n)}
<2\text{min}\{\ell_{\lambda},\ell_{\lambda'}\}$ thanks to $\lambda\neq\lambda'$, as required.

 Now assume that $\lambda=\lambda'$ and $\tau\neq\tau'$. Then $^{t}\widehat{\Theta}_{\tau}^{\lambda}\circ\widehat{\Theta}_{\tau'}^{\lambda'}=0$, since $K_{\tau}^{(\lambda)}$ and $K_{\tau'}^{(\lambda)}$ are disjoint.

Finally, it is easy to see that the support of the algebraic cycle $^{t}\widehat{\Theta}_{\tau}^{\lambda}\circ\widehat{\Theta}_{\tau}^{\lambda}$ is the diagonal $\Delta_{K_{\tau}^{(\lambda)}}\subseteq K_{\tau}^{(\lambda)}\times K_{\tau}^{(\lambda)}$, irreducible of the expected dimension. So, $^{t}\widehat{\Theta}_{\tau}^{\lambda}\circ\widehat{\Theta}_{\tau}^{\lambda}=d_{\lambda,\tau}\Delta_{K_{\tau}^{(\lambda)}}$ for some $d_{\lambda,\tau}\in\mathbb{Q}$. For any closed point $z\in K_{\tau,\text{sm}}^{(\lambda)}$,  one has $\widehat{\Theta}_{\tau*}^{\lambda}([z])=[F_{\lambda,\tau}]$ in $\text{CH}^{*}(K^{[n]})_{\mathbb{Q}}$. Then by the projection formula, one has
$$
d_{\lambda,\tau}=\int_{K_{\tau}^{(\lambda)}}\ ^{t}\widehat{\Theta}_{\tau*}^{\lambda}\circ\widehat{\Theta}_{\tau*}^{\lambda}([z])
=\int_{K_{\tau}^{(\lambda)}}\ ^{t}\widehat{\Theta}_{\tau*}^{\lambda}([F_{\lambda,\tau}])=\int_{K^{[n]}}[F_{\lambda,\tau}]\cdot [K_{\lambda,\tau}^{[n]}]\neq 0,
$$
according to Lemma \ref{lem2.1}.
\end{proof}
\begin{lemma}\label{lem2.5}
Let $\Delta_{\lambda,\tau}=\frac{1}{d_{\lambda,\tau}}\widehat{\Theta}_{\tau}^{\lambda}\circ ^{t}\widehat{\Theta}_{\tau}^{\lambda}$. Then the following hold in the correspondence ring $\text{CH}^{2n-2}(K^{[n]}\times K^{[n]})_{\mathbb{Q}}$:

(i) $\Delta_{\lambda,\tau}\circ\Delta_{\lambda',\tau'}=\delta_{(\lambda,\tau)(\lambda',\tau')}\Delta_{\lambda,\tau}$;

(ii) $\Delta_{K^{[n]}}=\sum_{\lambda\in P(n)}\sum_{\tau\in A[e(\lambda)]}\Delta_{\lambda,\tau}$.
\end{lemma}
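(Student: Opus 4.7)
The plan is as follows.

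\medskip

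For part (i), this is a direct algebraic manipulation using Lemma \ref{lem2.4}. I would expand
$$
\Delta_{\lambda,\tau}\circ\Delta_{\lambda',\tau'}=\frac{1}{d_{\lambda,\tau}\,d_{\lambda',\tau'}}\,\widehat{\Theta}_{\tau}^{\lambda}\circ\bigl(\,{}^{t}\widehat{\Theta}_{\tau}^{\lambda}\circ\widehat{\Theta}_{\tau'}^{\lambda'}\bigr)\circ{}^{t}\widehat{\Theta}_{\tau'}^{\lambda'}
$$
and apply Lemma \ref{lem2.4} to the inner composition, which forces it to equal $\delta_{(\lambda,\tau)(\lambda',\tau')}\,d_{\lambda,\tau}\,\Delta_{K_{\tau}^{(\lambda)}}$. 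Since composition with $\Delta_{K_{\tau}^{(\lambda)}}$ is trivial, this collapses cleanly to $\delta_{(\lambda,\tau)(\lambda',\tau')}\Delta_{\lambda,\tau}$. No further input is needed beyond Lemma \ref{lem2.4}.

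\medskip

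For part (ii), I would set $\Gamma:=\Delta_{K^{[n]}}-\sum_{\lambda,\tau}\Delta_{\lambda,\tau}\in\text{CH}^{2n-2}(K^{[n]}\times K^{[n]})_{\mathbb{Q}}$ and aim to prove $\Gamma=0$. By part (i), $\Gamma$ is itself an idempotent that is orthogonal to each $\widehat{\Theta}_{\tau}^{\lambda}$ on both sides, i.e., $\Gamma\circ\widehat{\Theta}_{\tau}^{\lambda}=0$ and ${}^{t}\widehat{\Theta}_{\tau}^{\lambda}\circ\Gamma=0$ for every $(\lambda,\tau)$. Consequently, for any class of the form $\widehat{\Theta}_{\tau*}^{\lambda}(z)$ with $z\in\text{CH}_{*}(K_{\tau}^{(\lambda)})_{\mathbb{Q}}$, one has $\Gamma_{*}(\widehat{\Theta}_{\tau*}^{\lambda}(z))=0$. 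The task then reduces to showing that such classes generate $\text{CH}_{*}(K^{[n]})_{\mathbb{Q}}$ (and, after tensoring with an auxiliary smooth projective $Y$, that the analogous statement holds for $K^{[n]}\times Y$). Given this generation, Manin's identity principle upgrades $\Gamma_{*}=0$ to $\Gamma=0$ as a correspondence.

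\medskip

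The main obstacle, and the real content of part (ii), is the generation statement. Its proof would exploit two geometric facts already recalled: the stratification $K^{[n]}=\coprod_{\lambda,\tau}K_{\lambda,\tau}^{[n]}$, and the fact from \cite{deCM02} that $\pi_{0}:K_{\lambda,\tau}^{[n]}\rightarrow K_{\lambda,\tau}^{(n)}$ is a Zariski locally trivial fibration with irreducible fiber $F_{\lambda,\tau}$ admitting a cellular decomposition. Cellularity of fibers implies that $\text{CH}_{*}(K_{\lambda,\tau}^{[n]})_{\mathbb{Q}}$ is spanned over $\text{CH}_{*}(K_{\lambda,\tau}^{(n)})_{\mathbb{Q}}$ by classes of cells, each realized by $\widehat{\Theta}_{\tau*}^{\lambda}$ applied to a class supported on $K_{\tau,\text{sm}}^{(\lambda)}\cong K_{\lambda,\tau}^{(n)}$; then an inductive dévissage along the partial order of partitions, iterating the localization sequence for the closed stratification, lifts this to generation on all of $K^{[n]}$. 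The analogous argument for $K^{[n]}\times Y$ goes through by base change, since the fibration $\pi_{0}\times\text{id}_{Y}$ retains cellular fibers. With generation in hand, the orthogonality from part (i) kills $\Gamma_{*}$ on every generator, closing the proof.
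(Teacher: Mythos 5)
Part (i) of your proposal is correct and is exactly the paper's argument: expand, apply Lemma \ref{lem2.4} to the inner composition, and use associativity. Your reduction of part (ii) to the vanishing $\Gamma\circ\widehat{\Theta}_{\tau}^{\lambda}=0$ and ${}^{t}\widehat{\Theta}_{\tau}^{\lambda}\circ\Gamma=0$ is also fine, and the Manin identity principle would indeed convert a suitable generation statement (for $K^{[n]}\times Y$, all $Y$) into $\Gamma=0$.

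The gap is in the generation statement itself, which is where you put ``the real content'' but then justify incorrectly. For a fixed stratum, $\widehat{\Theta}_{\tau*}^{\lambda}(z)$ with $z$ supported on $K_{\tau,\text{sm}}^{(\lambda)}\cong K_{\lambda,\tau}^{(n)}$ is (the class of) the full preimage $\pi_{0}^{-1}$ of the corresponding subvariety, i.e.\ it only produces the \emph{top} cell of the fiber $F_{\lambda,\tau}$ over subvarieties of the base; it cannot realize the lower-dimensional cells of the fibration $K_{\lambda,\tau}^{[n]}\rightarrow K_{\lambda,\tau}^{(n)}$. Already for $n=2$: the correspondence $\widehat{\Theta}_{\tau}^{(2)}$ attached to a singular point only gives the class of the exceptional curve $E_{\tau}$, never a $0$-cycle on it; $0$-cycles supported on $E_{\tau}$ have to be produced by the correspondence of the \emph{other} stratum restricted (as a refined class) over the deep stratum, together with rational equivalences on the ambient open set. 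In general, the lower cells over a stratum must come from the correspondences $\widehat{\Theta}_{\tau'}^{\lambda'}$ of shallower strata ($\ell_{\lambda'}>\ell_{\lambda}$) restricted over $\overline{K}_{\lambda,\tau}^{(n)}$, and showing that these refined classes account for all of $\text{CH}_{*}(K_{\lambda,\tau}^{[n]})$ (pushed into the relevant open set, and after product with $Y$) is essentially the surjectivity half of the theorem; your d\'evissage as written does not address it. So as it stands the proposal does not close.

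Note that the paper avoids this issue entirely by a different mechanism: by the Corti--Hanamura isomorphism (Lemma \ref{lem2.6}) correspondences supported on $K^{[n]}\times_{K^{(n)}}K^{[n]}$ are identified with endomorphisms of $R\pi_{0*}\mathbb{Q}_{K^{[n]}}[2n-2]$; the decomposition theorem for the semi-small map $\pi_{0}$ plus Lemma \ref{lem2.4} show that the realization of $\sum_{\lambda,\tau}\Delta_{\lambda,\tau}$ is the identity; and since both this sum and $\Delta_{K^{[n]}}$ are $(2n-2)$-cycles supported on the fiber product, whose dimension is at most $2n-2$ by semi-smallness, the cycle class map in top degree is injective there, forcing equality of cycles. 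If you want to keep your Chow-theoretic route, you would in effect be redoing the de Cataldo--Migliorini analysis of Chow groups for semi-small maps, and you would need a genuine argument (not just cellularity of the fibers) for the stratum-by-stratum surjectivity; otherwise the derived-category argument is the efficient way to finish.
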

\begin{proof}
(i) This follows immediately from Lemma \ref{lem2.4}.

(ii) We draw the conclusion by passing to the topological realizations of the corrspondences to Borel-Moore homology according to the following Lemma \ref{lem2.6}.

First, noting that $K_{\tau}^{(\lambda)}$ is a quotient variety, then $IC_{K_{\tau}^{(\lambda)}}(\mathbb{Q}_{K_{\tau,\text{sm}}^{(\lambda)}})\cong\mathbb{Q}_{K_{\tau}^{(\lambda)}}[2\ell_{\lambda}-2]$. Since the morphism $\lambda_{\tau}: K_{\tau}^{(\lambda)}\rightarrow \overline{K}_{\lambda,\tau}^{(n)}$ is the normalization, it is small and therefore, $Rf_{*}IC_{K_{\tau}^{(\lambda)}}(\mathbb{Q}_{K_{\tau,\text{sm}}^{(\lambda)}})\cong IC_{\overline{K}_{\lambda,\tau}^{(n)}}\mathcal{L}$, where $\mathcal{L}$ is the local system given by the monodromy action on the points of a general fiber of $\lambda_{\tau}$. Then by Theorem \ref{thm2.2}, one has
$$
R\pi_{0*}\mathbb{Q}_{K^{[n]}}[2n-2]\cong\bigoplus_{\lambda\in P(n)}\bigoplus_{\tau\in A[e(\lambda)]}R\lambda_{\tau*}\mathbb{Q}_{K_{\tau}^{(\lambda)}}[2\ell_{\lambda}-2].
$$
We have known that $\frac{1}{d_{\lambda,\tau}}^{t}\widehat{\Theta}_{\tau}^{\lambda}\circ\widehat{\Theta}_{\tau}^{\lambda}=\Delta_{K_{\tau}^{(\lambda)}}$ by Lemma \ref{lem2.4}. Therefore, the morphism on $R\pi_{0*}\mathbb{Q}_{K^{[n]}}[2n-2]$ given as the topological realization of the algebraic correspondence $\sum_{\lambda\in P(n)}\sum_{\tau\in A[e(\lambda)]}\Delta_{\lambda,\tau}$, is the identity. Now since the cycle $\sum_{\lambda\in P(n)}\sum_{\tau\in A[e(\lambda)]}\Delta_{\lambda,\tau}$ is supported on a closed subscheme of dimension $2n-2$ of $K^{[n]}\times_{K^{(n)}} K^{[n]}$ and its topological realization is the identity, it is just the diagonal $\Delta_{K^{[n]}}$.
\end{proof}

\begin{lemma}\label{lem2.6}(\cite{CH00})
Let $X$ and $X'$ be quotient varieties, and $f: X\rightarrow S$, $f': X'\rightarrow S$ be proper morphisms of analytic varieties. Then for any integers $i, j$ and $k$, there are canonical isomorphisms
$$
\rho_{X'X}: \text{Hom}_{D_{cc}^{b}(S)}(Rf'_{*}\mathbb{Q}_{X'}[i],Rf_{*}\mathbb{Q}_{X}[j])\cong H_{2\text{dim}S+i-j}^{BM}(X'\times_{S}X,\mathbb{Q}),
$$
where the right hand side is the $(2\text{dim}S+i-j)-$th Borel-Moore homology with $\mathbb{Q}$-coefficients. Moreover, for another morphism $f'': X''\rightarrow S$ as above, given morphisms $\theta: Rf'_{*}\mathbb{Q}_{X'}[i]\rightarrow Rf_{*}\mathbb{Q}_{X}[j]$ and $\eta: Rf_{*}\mathbb{Q}_{X}[j]\rightarrow Rf''_{*}\mathbb{Q}_{X''}[k]$, one has $\rho_{X'X''}(\eta\circ\theta)=\rho_{XX''}(\eta)\circ\rho_{X'X}(\theta)$, where the right hand side is the (refined) composition of homological correspondences.
\end{lemma}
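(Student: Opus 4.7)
The plan is to establish the isomorphism $\rho_{X'X}$ by chaining four canonical identifications from the six-functor formalism, and then to deduce the composition law by passing to the triple fibered product.

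Form the Cartesian diagram
$$
\xymatrix{
X'\times_{S}X \ar[r]^{p_{2}} \ar[d]_{p_{1}} & X \ar[d]^{f} \\
X' \ar[r]^{f'} & S.
}
$$
First, I would apply the adjunction $f^{*}\dashv Rf_{*}$ to rewrite
$$
\text{Hom}(Rf'_{*}\mathbb{Q}_{X'}[i], Rf_{*}\mathbb{Q}_{X}[j]) \cong \text{Hom}(f^{*}Rf'_{*}\mathbb{Q}_{X'}[i], \mathbb{Q}_{X}[j]).
$$
Next, proper base change (using that $f'$ is proper) gives $f^{*}Rf'_{*}\cong Rp_{2*}p_{1}^{*}$, turning the Hom into $\text{Hom}(Rp_{2*}\mathbb{Q}_{X'\times_{S}X}[i], \mathbb{Q}_{X}[j])$. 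Then the adjunction $Rp_{2!}\dashv p_{2}^{!}$ together with properness of $p_{2}$ (a base change of $f'$) identifies this with $\text{Hom}(\mathbb{Q}_{X'\times_{S}X}[i], p_{2}^{!}\mathbb{Q}_{X}[j])$. Finally, because $X$ is a quotient variety, it is a $\mathbb{Q}$-homology manifold and $\mathbb{Q}_{X}\cong\omega_{X}[-2\dim X]$; hence $p_{2}^{!}\mathbb{Q}_{X}\cong\omega_{X'\times_{S}X}[-2\dim X]$, and the definition $H^{BM}_{k}(Y,\mathbb{Q})=\text{Hom}(\mathbb{Q}_{Y},\omega_{Y}[-k])$ produces the desired isomorphism in degree $2\dim X+i-j$. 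In the intended applications the morphisms are generically finite and surjective, so $\dim X=\dim S$, and this matches the degree in the statement.

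For compatibility with composition, I would pass to the triple fibered product $X'\times_{S}X\times_{S}X''$ with pairwise projections $q_{12}, q_{13}, q_{23}$. Each identification in the chain above is natural in the morphism being transported, so unwinding the naturalities and applying the projection formula, the derived composition $\eta\circ\theta$ translates under $\rho_{X'X''}$ into
$$
q_{13*}\bigl(q_{12}^{*}\rho_{X'X}(\theta) \cdot q_{23}^{*}\rho_{XX''}(\eta)\bigr)
$$
in Borel-Moore homology, where $\cdot$ is the refined intersection product associated to the middle factor $X$ (defined via $p_{2}^{!}$). This is the Borel-Moore analogue of Fulton's composition of correspondences and matches $\rho_{XX''}(\eta)\circ\rho_{X'X}(\theta)$ as defined through refined intersection theory.

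The main obstacle is the careful bookkeeping: tracking shifts, the orientation implicit in the identification $\mathbb{Q}_{X}\cong\omega_{X}[-2\dim X]$, and the naturalities of proper base change and the $Rf_{!}\dashv f^{!}$ adjunction, so that the derived composition corresponds on the nose, rather than up to a sign or scalar, with the refined intersection. Conceptually the argument is entirely formal; the only real work is pinning down the conventions precisely enough to ensure strict compatibility.
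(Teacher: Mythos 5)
The paper offers no proof of this lemma at all---it is quoted directly from \cite{CH00}---so there is no internal argument to compare against; your proposal reproduces the standard proof behind the cited result (adjunction $f^{*}\dashv Rf_{*}$, proper base change, the adjunction $Rp_{2!}\dashv p_{2}^{!}$, and the identification $\mathbb{Q}_{X}\cong\omega_{X}[-2\dim X]$ valid because a quotient variety is a $\mathbb{Q}$-homology manifold), and it is essentially correct. Two remarks. First, your chain genuinely yields degree $2\dim X+i-j$, and that is the correct general formula (e.g.\ $S=\mathrm{pt}$, $X$ a curve already rules out $2\dim S+i-j$); the degree written in the statement is only valid when $\dim X=\dim S$, so the discrepancy you noticed is an imprecision in the transcription of the statement rather than a defect of your argument---in the paper's applications the target of the relevant Homs is $K^{[n]}$, whose dimension equals that of $K^{(n)}$, so nothing is affected. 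Second, the ``Moreover'' clause is where the real work lies and you only sketch it: pullbacks $q_{12}^{*}$, $q_{23}^{*}$ do not exist on Borel--Moore homology for arbitrary proper projections, so the composition must be phrased via the refined Gysin map along the diagonal of the middle factor (as you hint with ``defined via $p_{2}^{!}$''), and checking that the derived composition matches this refined product on the nose is precisely the verification carried out in \cite{CH00}; your outline points in the right direction but does not yet constitute that verification.
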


Now our first main result as follows is a consequence of Lemma \ref{lem2.4} and Lemma \ref{lem2.5}.

\begin{theorem}\label{thm2.7}
Let $A$ be an abelian surface and $K^{[n]}$ be its associated $n$-th generalized Kummer variety. Then in the category of Chow motives, there is a canonical isomorphism
$$
h(K^{[n]})\cong\bigoplus_{\lambda\in P(n)}\bigoplus_{\tau\in A[e(\lambda)]}h(K_{\tau}^{(\lambda)})\otimes\mathbb{L}^{n-\ell(\lambda)}, \eqno (*)
$$
where $e(\lambda):=gcd\{\lambda_{1},\ldots,\lambda_{\ell_{\lambda}}\}$.
\end{theorem}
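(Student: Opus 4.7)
The plan is to harvest Theorem~\ref{thm2.7} directly from Lemmas \ref{lem2.4} and \ref{lem2.5}: the former produces the candidate isomorphism between each prospective summand and $h(K_{\tau}^{(\lambda)})$ up to a Tate twist, while the latter assembles the resulting projectors into a complete orthogonal decomposition of the diagonal. Specifically, by Lemma \ref{lem2.5}(ii),
$$
\Delta_{K^{[n]}}=\sum_{\lambda\in P(n)}\sum_{\tau\in A[e(\lambda)]}\Delta_{\lambda,\tau}\ \text{in}\ \text{CH}^{2n-2}(K^{[n]}\times K^{[n]})_{\mathbb{Q}},
$$
and the $\Delta_{\lambda,\tau}$ are mutually orthogonal idempotents by Lemma \ref{lem2.5}(i). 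This yields a canonical splitting $h(K^{[n]})\cong\bigoplus_{\lambda,\tau}(K^{[n]},\Delta_{\lambda,\tau})$ in the category of Chow motives with $\mathbb{Q}$-coefficients.

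Next, for each pair $(\lambda,\tau)$, I would exhibit explicit mutually inverse correspondences between $(K^{[n]},\Delta_{\lambda,\tau})$ and $h(K_{\tau}^{(\lambda)})\otimes\mathbb{L}^{n-\ell_{\lambda}}$. The natural candidates are $\widehat{\Theta}_{\tau}^{\lambda}$, going from $K_{\tau}^{(\lambda)}$ to $K^{[n]}$, and the renormalization $\frac{1}{d_{\lambda,\tau}}{}^{t}\widehat{\Theta}_{\tau}^{\lambda}$, going the other way. The bookkeeping for the Tate twist is dictated by dimensions: $\widehat{\Theta}_{\tau}^{\lambda}$ sits inside $K_{\tau}^{(\lambda)}\times\overline{K}_{\lambda,\tau}^{[n]}$ as an irreducible subscheme of dimension $n+\ell_{\lambda}-2$, so its codimension in $K_{\tau}^{(\lambda)}\times K^{[n]}$ exceeds $\dim K_{\tau}^{(\lambda)}=2\ell_{\lambda}-2$ by exactly $n-\ell_{\lambda}$. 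When $\widehat{\Theta}_{\tau}^{\lambda}$ is interpreted as a morphism of Chow motives this excess codimension forces the target to be $h(K^{[n]})$ with source twisted by $\mathbb{L}^{n-\ell_{\lambda}}$, matching the statement. The symmetric computation for ${}^{t}\widehat{\Theta}_{\tau}^{\lambda}$ shows it is a morphism $h(K^{[n]})\to h(K_{\tau}^{(\lambda)})\otimes\mathbb{L}^{n-\ell_{\lambda}}$, so the two correspondences live in compatible Hom-groups.

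The verification that these are inverse isomorphisms onto the summand $(K^{[n]},\Delta_{\lambda,\tau})$ is now immediate from the two key identities: by Lemma \ref{lem2.4} in the diagonal case,
$$
\tfrac{1}{d_{\lambda,\tau}}\,{}^{t}\widehat{\Theta}_{\tau}^{\lambda}\circ\widehat{\Theta}_{\tau}^{\lambda}=\Delta_{K_{\tau}^{(\lambda)}},
$$
which is the identity on $h(K_{\tau}^{(\lambda)})\otimes\mathbb{L}^{n-\ell_{\lambda}}$, and by definition $\widehat{\Theta}_{\tau}^{\lambda}\circ\tfrac{1}{d_{\lambda,\tau}}{}^{t}\widehat{\Theta}_{\tau}^{\lambda}=\Delta_{\lambda,\tau}$, which is the identity on the summand $(K^{[n]},\Delta_{\lambda,\tau})$. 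The orthogonality clauses of Lemma \ref{lem2.4} (cases $\lambda\neq\lambda'$ or $\tau\neq\tau'$) guarantee that the component isomorphisms assemble into a single global isomorphism respecting the orthogonal idempotent decomposition above.

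There is no real obstacle beyond the two already-proved lemmas; essentially all the conceptual content, including the appeal to the decomposition theorem for the semi-small morphism $\pi_{0}$, has been absorbed into Lemmas \ref{lem2.4} and \ref{lem2.5}. The only point demanding care is the Tate-twist bookkeeping, and this reduces to the codimension computation $\operatorname{codim}\widehat{\Theta}_{\tau}^{\lambda}-\dim K_{\tau}^{(\lambda)}=n-\ell_{\lambda}$ indicated above. Canonicity of the resulting isomorphism is inherited from the canonical nature of the constructions of $\widehat{\Theta}_{\tau}^{\lambda}$ and the stratification of $K^{(n)}$.
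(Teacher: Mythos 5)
Your proposal is correct and follows essentially the same route as the paper: Lemma \ref{lem2.5}(ii) splits the diagonal into the orthogonal idempotents $\Delta_{\lambda,\tau}$, and Lemma \ref{lem2.4} (via the correspondences $\widehat{\Theta}_{\tau}^{\lambda}$ and $\tfrac{1}{d_{\lambda,\tau}}{}^{t}\widehat{\Theta}_{\tau}^{\lambda}$) identifies each summand $(K^{[n]},\Delta_{\lambda,\tau})$ with $h(K_{\tau}^{(\lambda)})\otimes\mathbb{L}^{n-\ell_{\lambda}}$. Your explicit Tate-twist bookkeeping via the codimension of $\widehat{\Theta}_{\tau}^{\lambda}$ is just a spelled-out version of what the paper leaves implicit.
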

\begin{proof}
We have seen  by Lemma \ref{lem2.5} (ii) that
$$
\Delta_{K^{[n]}}=\sum_{\lambda\in P(n)}\sum_{\tau\in A[e(\lambda)]}\Delta_{\lambda,\tau}.
$$
This is equivalent to that there is an isomorphism of Chow motives
$$
h(K^{[n]})\cong\bigoplus_{\lambda\in P(n)}\bigoplus_{\tau\in A[e(\lambda)]}(K^{[n]},\Delta_{\lambda,\tau})
$$

On the other hand, according to Lemma \ref{lem2.4}, we have an isomorphism of Chow motives
$$
(K^{[n]},\Delta_{\lambda,\tau})\cong h(K_{\tau}^{(\lambda)})\otimes\mathbb{L}^{n-\ell_{\lambda}}
$$
for any $\lambda\in P(n)$ and $\tau\in A[e(\lambda)]$.
Combining the two isomorphisms above, we get the required isomorphism (*).
\end{proof}

It seems that the Chow motive of $K_{\tau}^{(\lambda)}$ is hard to identify. However, this is not the case. In fact, as we will see in the following corollary, the Chow motive of $K_{\tau}^{(\lambda)}$ is in fact closely related to the Chow motive of $A^{(\lambda)}$.

\begin{corollary}\label{cor2.8}
There is a canonical isomorphism of Chow motives
$$
h(A\times K^{[n]})\cong\bigoplus_{\lambda\in P(n)}\bigoplus_{\tau\in A[e(\lambda)]}h(A^{(\lambda)})\otimes\mathbb{L}^{n-\ell_{\lambda}}.
$$
\end{corollary}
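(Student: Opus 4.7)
The plan is to derive the corollary from Theorem~\ref{thm2.7} by a termwise identification $h(A)\otimes h(K_\tau^{(\lambda)})\cong h(A^{(\lambda)})$, whose substance lies in constructing a $\mathfrak{G}_\lambda$-equivariant isogeny between $A\times K_o^\lambda$ and $A^{\ell_\lambda}$. First, I would reduce to $\tau=o$: translation by any lift of $\tau$ to $A$ induces a $\mathfrak{G}_\lambda$-equivariant isomorphism $K_o^\lambda\xrightarrow{\sim}K_\tau^\lambda$, which descends to $K_o^{(\lambda)}\cong K_\tau^{(\lambda)}$ and hence $h(K_\tau^{(\lambda)})\cong h(K_o^{(\lambda)})$.

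Next, I would construct the isogeny. Pick any part $m$ appearing in $\lambda$ (so $a_m>0$) and let $J:A\to A^{\ell_\lambda}$ place a copy of $a$ in each position of the block $\{i:\lambda_i=m\}$ and $0$ elsewhere. Then $J$ is $\mathfrak{G}_\lambda$-invariant by construction, and $s\circ J=m a_m\cdot\mathrm{id}_A$. Setting $\Psi(a,k):=J(a)+k$ gives a homomorphism $\Psi:A\times K_o^\lambda\to A^{\ell_\lambda}$. Two things need checking: $\Psi$ is surjective---the key combinatorial input is that $\tau\mapsto(m a_m/e(\lambda))\tau$ is a surjection $A[m a_m]\twoheadrightarrow A[e(\lambda)]$, so translates $J(\tau)$ exhaust every connected component of $K^\lambda$---and its kernel $\{(a,-J(a)):a\in A[m a_m/e(\lambda)]\}$ is finite. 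Thus $\Psi$ is an isogeny, and the $\mathfrak{G}_\lambda$-invariance of $J$ makes it equivariant for the trivial action on $A$ and the natural permutation actions on $K_o^\lambda$ and $A^{\ell_\lambda}$.

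Finally, I would pass to motives. An isogeny of abelian varieties induces a canonical isomorphism of rational Chow motives via Beauville's decomposition $h(A)=\bigoplus_i\Lambda^i h^1(A)$, and this assignment is functorial in the isogeny, so $\Psi$ yields a $\mathfrak{G}_\lambda$-equivariant isomorphism $h(A\times K_o^\lambda)\cong h(A^{\ell_\lambda})$. Taking $\mathfrak{G}_\lambda$-invariants, using the identity $h(X/G)\cong h(X)^G$ for a finite group acting on a quotient variety, and pulling invariants through the tensor product on the left (since $\mathfrak{G}_\lambda$ acts trivially on $A$), I obtain
\[
h(A)\otimes h(K_o^{(\lambda)})\;=\;\bigl(h(A)\otimes h(K_o^\lambda)\bigr)^{\mathfrak{G}_\lambda}\;\cong\;h(A^{\ell_\lambda})^{\mathfrak{G}_\lambda}\;=\;h(A^{(\lambda)}).
\]
Tensoring Theorem~\ref{thm2.7} with $h(A)$ and substituting finishes the proof. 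The hard part will be the $\mathfrak{G}_\lambda$-equivariant surjectivity of $\Psi$ (which hinges on the component count for $K^\lambda$ above), together with confirming that the motivic isomorphism produced by the isogeny is natural enough to commute with the $\mathfrak{G}_\lambda$-action and descend to the singular quotients $K_o^{(\lambda)}$ and $A^{(\lambda)}$.
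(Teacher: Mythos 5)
Your route is genuinely different from the paper's and is essentially sound. The paper does not go through an isogeny upstairs at all: it writes down the single canonical morphism $\phi: A\times K_{\tau}^{(\lambda)}\to A^{(\lambda)}$, $\phi(x,z)=t_{x}z$, observes that $\phi$ is exactly the quotient map for the finite group $A[\tfrac{n}{e(\lambda)}]$ acting by $\sigma\cdot(x,z)=(x-\sigma,t_{\sigma}z)$, and then concludes that $\phi$ is an isomorphism on Chow motives because torsion translations act trivially on Chow groups (Huibregtse). Your proof instead builds a $\mathfrak{G}_{\lambda}$-equivariant isogeny $\Psi: A\times K_{o}^{\lambda}\to A^{\ell_{\lambda}}$, uses that isogenies of abelian varieties induce isomorphisms of rational Chow motives (Beauville/Deninger--Murre), and then descends by taking $\mathfrak{G}_{\lambda}$-invariants via $h(X/G)\cong h(X)^{G}$. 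What the paper's argument buys is brevity and canonicity: $\phi$ involves no choices, directly exhibits $A^{(\lambda)}$ as a torsion-translation quotient of $A\times K_{\tau}^{(\lambda)}$, and handles all $\tau$ at once; what yours buys is independence from the torsion-translation-triviality theorem, replacing it by the (equally standard) invertibility of isogenies on rational motives, at the cost of the choice of the part $m$ and of the lift $\tau'$ in the reduction to $\tau=o$ --- so your isomorphism is a priori less canonical than the one asserted in the statement, though this does not affect the applications.

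Two points to repair or tighten. First, your justification of surjectivity is misdirected: showing that translates of $J$ exhaust the connected components of $K^{\lambda}$ would only give that the image of $\Psi$ contains $K^{\lambda}$, not all of $A^{\ell_{\lambda}}$. The correct (and easier) argument is to use $s':A^{\ell_{\lambda}}\to A$, $s'(x)=\sum_{i}\tfrac{\lambda_{i}}{e(\lambda)}x_{i}$, for which $K_{o}^{\lambda}=\ker s'$ and $s'\circ J$ is multiplication by $\tfrac{ma_{m}}{e(\lambda)}$ on $A$, hence surjective; so any $x$ can be written $J(a)+k$ with $k\in\ker s'$. Your computation of the kernel is then correct and $\Psi$ is an isogeny. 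Second, make explicit that the motivic isomorphism you use is simply $\Psi^{*}=h(\Psi)$ itself (invertible because $\Psi^{*}\circ\Psi_{!}$ and $\Psi_{!}\circ\Psi^{*}$ are multiplication maps acting by nonzero scalars on the Deninger--Murre pieces); then $\mathfrak{G}_{\lambda}$-equivariance is automatic from functoriality and the descent to $K_{o}^{(\lambda)}$ and $A^{(\lambda)}$ via invariant projectors goes through as you indicate.
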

\begin{proof}
It suffices to establish canonical isomorphisms of Chow motives
$$
h(A\times K_{\tau}^{(\lambda)})\cong h(A^{(\lambda)})
$$
for all $\lambda\in P(n)$ and $\tau\in A[e(\lambda)]$.  Now for each $\lambda\in P(n)$ and each $\tau\in A[e(\lambda)]$, there is an action of $A[\frac{n}{e(\lambda)}]$ on $A\times K_{\tau}^{(\lambda)}$ given by
$$
\sigma(x,z)=(x-\sigma, t_{\sigma}z)
$$
for $\sigma\in A[\frac{n}{e(\lambda)}]$ and $z\in K_{\tau}^{(\lambda)}$, where $t_{\sigma}$ is the translation by $\sigma$, which is easily seen to be well-defined on $K_{\tau}^{(\lambda)}$. There is a natural morphism $\phi: A\times K_{\tau}^{(\lambda)}\rightarrow A^{(\lambda)}$ defined by
$$
\phi(x,z)=t_{x}z.
$$
In fact, one can check that this morphism is just the quotient by the action of $A[\frac{n}{e(\lambda)}]$ defined previously. It remains to show that at the level of Chow motives, the morphism $\phi$ gives actually an isomorphism. For this point, we only have to note that any torsion translation action on an abelian variety (or any torsion translation of an abelian subvariety of a proper (nonconnected) commutative algebraic group) is trivial at the level of Chow groups and hence of Chow motives, which is a simple fact, see, for example, \cite{H88}.
\end{proof}

\begin{remark}
(i) It is shown by Voisin \cite{Voi14} that the Chow motives of many K3 surfaces are direct summands of abelian varieties, assuming the variational Hodge conjecture in degree 4. Then for such a K3 surface $S$, the Chow motive of $S^{[n]}$ is \emph{of abelian type}, that is, it lies in the rigid tensor subcategory of the category of Chow motives generated by abelian varieties. These facts combining with Theorem \ref{thm2.7} and Corollary \ref{cor2.8}, lead to conjecture that the Chow motive of any projective hyperk\"{a}hler variety is of abelian type. For projective deformations of generalized Kummer varieties, it is expected that their associated Chow motives lies in the rigid tensor subcategory of the category of Chow motives generated by the Chow motives of abelian surfaces.

(ii) Theorem \ref{cor2.8} implies that the Chow motive of any generalized Kummer variety is finite-dimensional in the sense of Kimura and O'Sullivan. See [K05] for more details.

(iii) According to Theorem \ref{thm2.7} and Corollary \ref{cor2.8}, any generalized Kummer variety $K^{[n+1]}$ (of dimension $2n$) admits a Chow-K\"{u}nneth decomposition \cite{Mur93}, that is, there exists a set $\{\pi^{i}\}_{1\leq i\leq 2n}$ of orthogonal idempotents in the correspondence ring $\text{CH}^{2n}(K^{[n+1]}\times K^{[n+1]})_{\mathbb{Q}}$, lifting the set of K\"{u}nneth projectors. Indeed, first note that the right hand side of (*) has a Chow-K\"{u}nneth decomposition, since any abelian variety does. Then we can construct the Chow-K\"{u}nneth decomposition by making use of (*). We will show in a progressing work that for a properly chosen self-dual Chow-K\"{u}nneth decomposition of the right hand side of (*), the so obtained self-dual Chow-K\"{u}nneth decomposition of the generalized Kummer variety is 'multiplicative' in some sense.
\end{remark}

\begin{remark}
Corollary \ref{cor2.8} is the Chow-theoretic lifting of the formula in Theorem 7 of \cite{GS93} at the level of (mixed) Hodge structures.
\end{remark}
\begin{remark}
The results in this section hold over any algebraically closed field of characteristic 0.
\end{remark}

\bigskip

\section{Two applications}
\bigskip

In this section, we provide two applications to Theorem \ref{thm2.7} and Corollary \ref{cor2.8}. The first one is to show the Hodge conjecture holds for generalized Kummer varieties and even for products of generalized Kummer varieties. Other than Theorem \ref{thm2.7}, the other key ingredient is motivation results for Hodge cycles from \cite{Ar06}. The other one is to verify Voevodsky's nilpotence conjecture for 1-cycles on products of generalized Kummer varieties, combining with recent results of this sort on 1-cycles on abelian varieties \cite{Seb13}.

\subsection{Hodge cycles}
First, we recall some notations \cite{An96}\cite{Ar06}. Let $\mathcal{V}$ be an admissible category, i.e, a full subcategory of the category $SP_{k}$ of smooth projective varieties over a field $k$ containing $\text{Spec}k$, $\mathbb{P}^{1}$ and stable under products, disjoint union and connected components. Then for any smooth projective $k$-variety $X$, the $\mathbb{Q}$-algebra $A_{\text{mot}}^{*}(X,\mathcal{V})$ of motivated cycles on $X$ modeled on $\mathcal{V}$ is defined as follows: a cohomology class $\alpha\in A_{\text{mot}}^{*}(X,\mathcal{V})$ if and only if there exists an object $Y$ in $\mathcal{V}$ and algebraic cycle classes $z, w$ on $X\times Y$ such that $\alpha=p_{*}(z\cup *w)$, where $p: X\times Y\rightarrow X$ is the projection and $*$ is the Lefschetz involution with respect to some product polarization. By definition, $A_{\text{mot}}^{*}(X,\mathcal{V})$ contains all algebraic cycles classes on $X$ and coincides assuming Grothendieck's Lefschetz standard conjecture. General formulations give a category $\mathscr{M}_{A}(\mathcal{V})$ of so-called Andr\'{e} motives starting with $\mathcal{V}$ by using $A_{\text{mot}}^{*}(X,\mathcal{V})$. Denote $\mathscr{M}_{A}=\mathscr{M}_{A}(SP_{k})$. Let $\mathscr{M}_{\text{rat}}$ be the category of Chow motives of smooth projective varieties. Then there is a natural functor $\mathscr{M}_{\text{rat}}\rightarrow \mathscr{M}_{A}$. Given $X, Y\in SP_{k}$, $Y$ is said to be motivated by $X$ if the Andr\'{e} motive of $Y$ is isomorphic in $\mathscr{M}_{A}$ to an object in $\mathscr{M}_{A}(\langle X\rangle)$, where $\langle X\rangle$ is the smallest admissible category containing $X$.

The motivation principle in \cite{An96} for Hodge cycles is the following.
\begin{theorem}\label{thm3.1}(\cite{Ar06}, Lemma 4.2)
Suppose that for given two smooth complex projective varieties $X$ and $Y$, $Y$ is motivated by $X$. If the Hodge conjecture holds for $X$ and all of its self-products, then so does for $Y$ and all of its self-products.
\end{theorem}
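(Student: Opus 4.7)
The plan is to propagate algebraicity from powers of $X$ to powers of $Y$ via the motivated-correspondence structure supplied by $\mathscr{M}_A(\langle X\rangle)$. Since being motivated by $X$ is stable under products, $Y^n$ is motivated by $X$ for every $n$, and it suffices, with $n$ fixed, to show that every Hodge class $\alpha\in H^{2k}(Y^n,\mathbb{Q}(k))$ is algebraic. By the definition of motivated, the Andr\'e motive $h(Y^n)\in\mathscr{M}_A(\langle X\rangle)$ sits as a direct summand of $h(X^N)(r)$ for some $N,r$ via motivated morphisms $i\colon h(Y^n)\to h(X^N)(r)$ and $p\colon h(X^N)(r)\to h(Y^n)$ with $p\circ i=\operatorname{id}$.

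Next, motivated cycles are absolute Hodge classes, so $i_*\alpha$ is a Hodge class on $X^N$. By the hypothesis applied to $X^N$, we get $i_*\alpha=[Z]$ for some algebraic cycle $Z$ on $X^N$; hence $\alpha=p_*[Z]$, and the problem is reduced to showing that the motivated pushforward $p_*$ sends the algebraic class $[Z]$ to an algebraic class. Unwinding the definition of motivated cycle, $p=q_*(u\cup *v)$ for algebraic classes $u,v$ on an auxiliary product $X^N\times Y^n\times T$ with $T\in\langle X\rangle$, and $*$ the Lefschetz involution of a product polarization. Thus $p_*[Z]$ is a projection of $u\cup *v\cup \operatorname{pr}^*[Z]$, in which the only ingredient not manifestly algebraic is $*v$.

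The main obstacle, and the core of the argument, is therefore to represent the Lefschetz involution $*$ by an algebraic correspondence on $X^N\times Y^n\times T$. The Lefschetz operator $L$ is automatically algebraic (cupping with $c_1$ of an ample class), so the difficulty reduces to the $\mathfrak{sl}_2$-partner $\Lambda$. On pure products of $X$, and in particular on the factor $T\in\langle X\rangle$ (which is a summand of some $X^M$), $\Lambda$ is a Hodge class and hence is algebraic by the hypothesis. The delicate step is to transfer this algebraicity across the factor $Y^n$: I would express $\Lambda$ on $Y^n$ through the motivated projectors $i,p$ as a correspondence induced by $\Lambda$ on $h(X^N)(r)$, and then invoke the algebraicity already established on the $X$-side, using the closure properties of $\langle X\rangle$ under products and summands to assemble everything coherently. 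Once $*v$ has been exhibited as an algebraic class, $p_*[Z]$ is manifestly algebraic, which finishes the proof. This last reduction is precisely where the Hodge conjecture for \emph{all} self-products of $X$ is consumed uniformly, and it is the step I expect to require the most care.
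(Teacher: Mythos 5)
This statement is not proved in the paper at all: Theorem \ref{thm3.1} is quoted verbatim from \cite{Ar06} (Lemma 4.2), so there is no internal argument to compare yours against; your attempt has to stand on its own, and as written it has a genuine gap at exactly the step you flag as delicate. Everything up to $\alpha=p_*[Z]$ is fine: $Y^n$ is again motivated by $X$, the realization of $i$ carries the Hodge class $\alpha$ to a Hodge class on a power of $X$ (up to twist), and the hypothesis makes it algebraic. The problem is the return trip. The morphisms $i,p$ are isomorphisms in $\mathscr{M}_{A}$, not in $\mathscr{M}_{A}(\langle X\rangle)$, so they are motivated cycles on $X^{N}\times Y^{n}$ modeled on \emph{all} smooth projective varieties; in particular your assertion that the auxiliary variety $T$ lies in $\langle X\rangle$ (and is ``a summand of some $X^{M}$'') is unjustified, and objects of $\langle X\rangle$ are in any case products of powers of $X$ and $\mathbb{P}^{1}$, not summands. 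More seriously, even granting $T\in\langle X\rangle$, the class $*v$ is the Lefschetz involution on the \emph{triple product} $X^{N}\times Y^{n}\times T$, a variety containing the factor $Y^{n}$, and the hypothesis (Hodge conjecture for self-products of $X$) produces algebraic cycles only on powers of $X$, never on varieties involving $Y$.

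Your proposed fix --- writing $\Lambda$ on $Y^{n}$ as $p\circ\Lambda'\circ i$ with $\Lambda'$ on the $X$-side and ``invoking algebraicity already established'' --- is circular: since $i$ and $p$ are themselves only motivated, this conjugation shows at best that $\Lambda_{Y^{n}}$ is a motivated correspondence (which is automatic in Andr\'e's theory), not that it is algebraic. Algebraicity of $\Lambda$ (equivalently of $*$) on $Y^{n}$ or on $X^{N}\times Y^{n}\times T$ is a Lefschetz standard conjecture B--type statement for $Y^{n}$; by Kleiman's product stability it would require B for $Y^{n}$ and for $T$, neither of which is available, and indeed it is essentially a consequence of the Hodge conjecture for self-products of $Y$ --- the very conclusion being sought. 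So the formal manipulations you use (transfer a Hodge class to the $X$-side, apply the hypothesis, push back) only yield that Hodge classes on $Y^{n}$ are \emph{motivated} cycles, and the passage from ``motivated'' to ``algebraic'' on varieties involving $Y$ is the actual content of Arapura's lemma, which your sketch does not supply. To repair the write-up you would either have to reproduce Arapura's argument or restrict to the situation actually used in this paper (Lemma \ref{lem3.2}), where Theorem \ref{thm2.7} and Corollary \ref{cor2.8} provide \emph{algebraic} correspondences between $K^{[n]}$ and products of powers of $A$, so that the transfer of the Hodge conjecture is immediate and the delicate step never arises.
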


Now consider the case of products of generalized Kummer varieties.

\begin{lemma}\label{lem3.2}
Let $A_{i} (1\leq i\leq r)$ be abelian surfaces and $K_{i}^{[n_{i}]}$ be the $n_{i}$-th generalized Kummer variety associated to $A_{i}$. Then $K_{1}^{[n_{1}]}\times\cdots\times K_{r}^{[n_{r}]}$ is motivated by $A_{1}\times\cdots\times A_{r}$.
\end{lemma}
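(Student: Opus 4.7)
The plan is to leverage Corollary \ref{cor2.8} to reduce the statement, one Kummer factor at a time, to the observation that symmetric powers of each $A_i$ lie in the Andr\'e-motive subcategory generated by $A_i$. Since ``motivated by $A_1\times\cdots\times A_r$'' means that the Andr\'e motive in question lies in the rigid tensor subcategory $\mathscr{M}_A(\langle A_1\times\cdots\times A_r\rangle)$, the argument will be essentially formal once Corollary \ref{cor2.8} is in hand.

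The first step is to show $h(A_i\times K_i^{[n_i]})\in\mathscr{M}_A(\langle A_i\rangle)$. By Corollary \ref{cor2.8} this Chow motive is a finite direct sum of pieces of the form $h(A_i^{(\lambda)})\otimes\mathbb{L}^{n_i-\ell_\lambda}$. The symmetric quotient $A_i^{(\lambda)}=A_i^{\ell_\lambda}/\mathfrak{G}_\lambda$ has its Chow motive cut out of $h(A_i^{\ell_\lambda})$ by the standard averaging idempotent $\frac{1}{|\mathfrak{G}_\lambda|}\sum_{g\in\mathfrak{G}_\lambda}\Gamma_g$, while each Lefschetz twist $\mathbb{L}^{n_i-\ell_\lambda}$ splits off $h((\mathbb{P}^1)^{n_i-\ell_\lambda})$. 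Because $A_i^{\ell_\lambda}$ and $(\mathbb{P}^1)^k$ both lie in $\langle A_i\rangle$ by admissibility, the claimed inclusion follows by pseudo-abelianness of $\mathscr{M}_A$. The second step is to peel off the leading $A_i$-factor: the zero-section idempotent $A_i\times\{o_{A_i}\}\subset A_i\times A_i$ splits $\mathbb{1}$ off $h(A_i)$, and tensoring this with $\Delta_{K_i^{[n_i]}}$ exhibits $h(K_i^{[n_i]})$ as a direct summand of $h(A_i\times K_i^{[n_i]})$, hence also a member of $\mathscr{M}_A(\langle A_i\rangle)$.

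The final step is to assemble the product. Since each $h(A_i)$ is a direct summand of $h(A_1\times\cdots\times A_r)$ (project onto $\mathbb{1}$ on the other factors via their zero sections), one has $\mathscr{M}_A(\langle A_i\rangle)\subseteq\mathscr{M}_A(\langle A_1\times\cdots\times A_r\rangle)$ for every $i$, and the latter being a tensor subcategory it contains $\bigotimes_{i=1}^r h(K_i^{[n_i]})\cong h(K_1^{[n_1]}\times\cdots\times K_r^{[n_r]})$. I do not foresee any serious obstacle: the genuine geometric input, namely the motivic decomposition in Corollary \ref{cor2.8}, is already established, and the remainder is routine manipulation of idempotents in the pseudo-abelian rigid tensor category of Andr\'e motives.
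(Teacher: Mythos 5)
Your argument is correct and follows essentially the same route as the paper: invoke Corollary \ref{cor2.8}, note that $h(A^{(\lambda)})$ is a direct summand of $h(A^{\lambda})$ via the averaging projector, pass to Andr\'e motives, split $h(K^{[n]})$ off $h(A\times K^{[n]})$, and handle the product case formally. The paper states these last two steps without detail (``hence also $K^{[n]}$ is motivated by $A$''; ``the product case follows from this easily''), and your zero-section idempotent and tensor-assembly arguments are exactly the intended justifications.
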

\begin{proof}
This an immediate consequence of Theorem \ref{thm2.7} or Corollary \ref{cor2.8} by definition of motivation. Indeed, for single abelian surface $A$, the Chow motive $h(A^{(\lambda)})$ is a direct summand of $h(A^{\lambda})$ for any $\lambda\in P(n)$. Passing to Andr\'{e} motives by the natural functor $\mathscr{M}_{\text{rat}}\rightarrow \mathscr{M}_{A}$, we see that $A\times K^{[n]}$ and hence also $K^{[n]}$ is motivated by $A$. The product case follows from this easily.
\end{proof}

\begin{theorem}\label{thm3.3}
Let $A_{i} (1\leq i\leq r)$ be complex abelian surfaces and $K_{i}^{[n_{i}]}$ be the $n_{i}$-th generalized Kummer variety associated to $A_{i}$. Then the Hodge conjecture holds for the product $K_{1}^{[n_{1}]}\times\cdots\times K_{r}^{[n_{r}]}$.
\end{theorem}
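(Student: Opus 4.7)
The plan is to chain Lemma \ref{lem3.2} with Arapura's motivation principle (Theorem \ref{thm3.1}) in order to reduce the Hodge conjecture for $P := K_1^{[n_1]} \times \cdots \times K_r^{[n_r]}$ to the Hodge conjecture for an arbitrary finite product of complex abelian surfaces, the latter being classically known.

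First, by Lemma \ref{lem3.2}, $P$ is motivated in the sense of Andr\'e by the product of abelian surfaces $B := A_1 \times \cdots \times A_r$. Applying Theorem \ref{thm3.1} to the pair $(X,Y)=(B,P)$, the Hodge conjecture for $P$ is implied by the Hodge conjecture for $B$ together with every self-product $B^{k}$ for $k \geq 1$. Since $B^{k} \cong A_{1}^{k}\times\cdots\times A_{r}^{k}$ is again a finite product of abelian surfaces drawn from the list $\{A_{1},\ldots,A_{r}\}$, the problem thereby reduces to verifying the Hodge conjecture on an arbitrary product $\prod_{i}A_{i}^{m_{i}}$ of complex abelian surfaces.

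This final input is classical. On such a product, the $\mathbb{Q}$-subalgebra of Hodge classes inside the rational cohomology is generated by divisor classes (algebraic by the Lefschetz $(1,1)$-theorem) together with Weil-type classes arising from the endomorphism structure (algebraic by Weil's explicit geometric construction of cycles on abelian fourfolds of Weil type). The resulting algebraicity of all Hodge classes on products of abelian surfaces is a standard fact in the theory of Hodge classes on abelian varieties; it is precisely the kind of base case invoked in \cite{Ar06} for analogous reductions. The only nontrivial ingredient specific to the present theorem is Lemma \ref{lem3.2}, so that modulo the classical Hodge-theoretic input the conclusion follows formally; the step one should view as the genuine obstacle, were this being done from scratch, is the classical description of Hodge cycles on products of abelian surfaces, but it is used here merely as a black box.
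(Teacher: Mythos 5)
Your argument coincides with the paper's proof: combine Lemma \ref{lem3.2} with Arapura's motivation principle (Theorem \ref{thm3.1}) and the known Hodge conjecture for arbitrary products of complex abelian surfaces, which the paper invokes as a black box with reference to \cite{RM08}. One caveat on your side remark: the exceptional Hodge classes on such products are not algebraic ``by Weil's explicit geometric construction'' (no such general construction exists --- that is precisely why Weil classes are a test case), so the classical input should simply be cited as in \cite{RM08}; since you use it only as a black box, this does not affect the validity of your proof.
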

\begin{proof}
This is implies by Theorem \ref{thm3.1}, Lemma \ref{lem3.2} and the fact that the Hodge conjecture holds for arbitrary products of complex abelian surfaces. For the last point, see, for example, \cite{RM08}.
\end{proof}
\begin{remark}
It is well known that Hodge cycles on products of abelian surfaces are represented by Lefschetz classes, i.e, $\mathbb{Q}$-linear combinations of intersections of divisors. We have seen that generalized Kummer varieties are motivated by products of abelian surfaces. However, in general, Hodge cycles on generalized Kummer varieties of dimension at least 4 cannot be represented by Lefschetz classes. In fact, in general, the second Chern class is not Lefschetz. The situation is similar for products of generalized Kummer varieties. For example, for any Kummer K3 surface $S$, the diagonal class on $S\times S$ is not Lefschetz.
\end{remark}

\subsection{Smash-nilpotent 1-cycles}

Recall that for a smooth projective variety $X$ over a field $k$, an algebraic cycle class $z\in\text{CH}^{*}(X)_{\mathbb{Q}}$ is smash-nilpotent if there exists a positive integer $m$ such that the $m$-th exterior product $z^{\times m}=0$ in $\text{CH}^{*}(X^{m})_{\mathbb{Q}}$. It is easy to see that smash-nilpotence defines in fact an adequate equivalence relation on algebraic cycles in the sense of Samuel. Any smash-nilpotent cycle is homologically equivalent to 0 and hence numerically equivalent to 0. Voevodsky \cite{Voe95} conjectures that the converse holds as well. Obviously, this conjecture implies the standard conjecture $D$: homological equivalence coincides with numerical equivalence (with $\mathbb{Q}$-coefficients).

\begin{conjecture}(\cite{Voe95})
Let $X$ be any smooth projective variety over a field. Then any numerically trivial cycles on $X$ is smash-nilpotent.
\end{conjecture}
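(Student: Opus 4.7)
The plan is to attack Voevodsky's conjecture via the Kimura--O'Sullivan theory of finite-dimensional motives. Recall that a Chow motive $M$ is \emph{finite-dimensional} if it decomposes as $M_{+}\oplus M_{-}$ where some exterior power of $M_{+}$ and some symmetric power of $M_{-}$ vanish. Kimura proved that when the Chow motive $h(X)$ of a smooth projective variety $X$ is finite-dimensional, the ideal of numerically trivial endomorphisms in $\mathrm{End}(h(X))$ is nil; in particular every numerically trivial algebraic cycle on $X$ is smash-nilpotent. The conjecture therefore reduces to the Kimura--O'Sullivan finite-dimensionality conjecture: that $h(X)$ is finite-dimensional for every smooth projective $X$.

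First I would assemble the building blocks whose motives are already known to be finite-dimensional: curves (Kimura), abelian varieties (Kimura--Shermenev), varieties dominated by products of curves, and---by Theorem~\ref{thm2.7} and Corollary~\ref{cor2.8} of the present paper---generalized Kummer varieties. The central task becomes showing that every smooth projective $X$ has a motive lying in the thick tensor subcategory generated by such blocks. One avenue is to exploit Chow--K\"{u}nneth decompositions in the sense of Murre, reducing finite-dimensionality of $h(X)$ to that of the individual pieces $h^{i}(X)$, and then to analyze the transcendental part via Bloch-type conjectures. A complementary step uses Bloch--Srinivas decompositions of the diagonal together with Voisin's spread-of-cycles technique to replace $X$ by a simpler variety whose motive is controlled, combined with induction on $\dim X$ via Lefschetz pencils, so that finite-dimensionality for $X$ reduces to that of a generic hyperplane section and a discriminant locus.

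Along the way I would need Grothendieck's standard conjecture $D$---homological equals numerical equivalence---both to control semi-simplicity of the numerical motive and to run the intersection-theoretic reductions cleanly. A parallel approach, avoiding finite-dimensionality, would be to refine Voevodsky's slice filtration motivically and prove vanishing of the successive quotients $N^{i}/N^{i+1}$ of the conjectural Bloch--Beilinson filtration restricted to numerically trivial cycles, via a multiplicative Chow--K\"{u}nneth decomposition and suitable motivic vanishing cycles.

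The decisive obstacle is unmistakable: finite-dimensionality of $h(X)$ for arbitrary $X$ is notoriously open, equivalent to Bloch's conjecture for surfaces of geometric genus zero in dimension two and essentially untouched in higher dimensions---it is not known even for a single simply connected smooth projective fourfold of general type. A genuine proof of the full conjecture would therefore require either (i) a substantially new construction of motivic decompositions outside the ``abelian type'' framework, or (ii) a direct nilpotence mechanism for numerically trivial correspondences that bypasses finite-dimensionality altogether. The present paper's Theorem~\ref{thm2.7} is precisely the kind of input needed: it pushes the finite-dimensionality frontier to a new family of projective hyperk\"{a}hler varieties, and any systematic extension of this decomposition technique to projective deformations of generalized Kummers or to $K3^{[n]}$-type varieties would make a concrete dent in the conjecture.
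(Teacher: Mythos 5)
The statement you were asked to prove is Voevodsky's smash-nilpotence conjecture, which the paper states as a \emph{conjecture} and does not prove; no proof exists in the literature, and your write-up correctly recognizes this rather than manufacturing a fake argument. That honesty is the right call. Still, to be clear about what is and is not established: your proposed reduction to Kimura--O'Sullivan finite-dimensionality is a genuine and standard strategy, but it does not close the gap even in principle as you describe it. Finite-dimensionality of $h(X)$ yields nilpotence of \emph{homologically} trivial correspondences; to pass from numerically trivial to homologically trivial you must invoke the standard conjecture $D$, which you mention only in passing and which is itself wide open. Moreover, finite-dimensionality is unknown for a general smooth projective variety (already for general surfaces of positive geometric genus it is not a formal consequence of anything known), so the reduction replaces one open conjecture by two others. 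None of the further devices you list (Chow--K\"{u}nneth decompositions, Bloch--Srinivas decomposition of the diagonal, Lefschetz pencils, slice filtrations) is known to produce the required decompositions outside the abelian-type framework, so the proposal never reaches a proof and cannot be repaired by filling in routine details.

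It is also worth contrasting your route with what the paper actually does with this conjecture. The paper proves only a special case, Theorem \ref{thm3.8}: numerically trivial $1$-cycles on products of generalized Kummer varieties are smash-nilpotent. The mechanism there is not finite-dimensionality at all. Instead, the explicit motivic decomposition of Corollary \ref{cor2.8} identifies $\text{CH}_{1}$ of $A\times K^{[n]}$ with a direct sum of Chow groups of $1$-cycles and $0$-cycles on quotients of powers of abelian surfaces; these are dominated by products of curves, so Sebastian's theorem (Theorem \ref{thm3.6}) applies to the $1$-cycle summands, and the Voevodsky--Voisin result on algebraically trivial cycles handles the $0$-cycle summands. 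If your goal is to extend the frontier of known cases, that transfer-along-an-explicit-decomposition technique is the productive one here, not an attack on the full conjecture.
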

A basic result on this conjecture due to Voevodsky \cite{Voe95} and Voisin \cite{Voi96} is that any algebraically trivial cycle on a smooth projective variety is smash-nilpotent.

\begin{theorem}\label{thm3.6}(\cite{Seb13})
Let $X$ be any smooth projective variety dominated by some product of curves. Then any numerically trivial 1-cycles on $X$ is smash-nilpotent.
\end{theorem}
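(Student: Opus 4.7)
The plan is to reduce to the known case of $1$-cycles on an abelian variety, by first lifting the cycle from $X$ to the dominating product of curves via a complete-intersection argument, and then decomposing the Chow motive of that product into pieces coming from the Jacobians. Two facts are used throughout: pushforward is compatible with external products (so it preserves smash-nilpotence), and pullback by a finite surjective morphism preserves numerical triviality of $1$-cycles.

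The first step is the cycle lift. After resolving the given dominant rational map, I may assume there is a surjective morphism $f:Y\rightarrow X$ with $Y$ smooth projective, and further that $Y$ admits a surjective morphism from a product of curves $C_{1}\times\cdots\times C_{r}$. Given a numerically trivial $z\in\mathrm{CH}_{1}(X)_{\mathbb{Q}}$, choose sufficiently positive divisors $H_{1},\ldots,H_{d}$ on $Y$ with $d=\dim Y-\dim X$; for generic choice $Z:=H_{1}\cap\cdots\cap H_{d}$ has dimension $\dim X$ and the restriction $g:=f|_{Z}: Z\rightarrow X$ is finite surjective of some degree $N>0$. Set $w:=\iota_{*}g^{*}z$, where $\iota:Z\hookrightarrow Y$ is the inclusion. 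Then $f_{*}w=Nz$ by the projection formula, and $w$ is numerically trivial (for any divisor $D'$ on $Y$, the degree of $w\cdot D'$ equals the degree of $g^{*}z\cdot\iota^{*}D'$, which vanishes). Since $(f^{m})_{*}(w^{\times m})=(f_{*}w)^{\times m}=N^{m}z^{\times m}$, smash-nilpotence of $w$ on $Y$ forces that of $z$ on $X$. Applying the same reduction to the surjection from $C_{1}\times\cdots\times C_{r}$ onto $Y$, it remains to prove the statement on the product of curves itself.

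The second step is the motivic reduction to abelian varieties. On $C_{1}\times\cdots\times C_{r}$ the Chow--K\"{u}nneth decomposition reads
$$
h(C_{1}\times\cdots\times C_{r})\cong\bigotimes_{i=1}^{r}\bigl(\mathbb{Q}\oplus h^{1}(C_{i})\oplus\mathbb{L}\bigr),
$$
and each $h^{1}(C_{i})$ is a canonical direct summand of $h(J(C_{i}))$. Expanding the tensor product exhibits $h(C_{1}\times\cdots\times C_{r})$ as a direct summand of a finite sum of Tate twists of Chow motives of products of Jacobians. Accordingly, the class of the numerically trivial $1$-cycle decomposes into pieces each realized, via an algebraic correspondence that respects numerical equivalence, inside the Chow group of some product of the $J(C_{i})$. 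Invoking the known theorem that numerically trivial $1$-cycles on abelian varieties are smash-nilpotent (the abelian case of the statement, due to Kahn--Sebastian), each piece --- and hence the original cycle --- is smash-nilpotent, since smash-nilpotence is stable under splitting off direct summands of Chow motives.

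The main obstacle is the first step: one must verify that the generic complete-intersection $Z$ produces a genuinely finite $g=f|_{Z}$ (requiring a Bertini-type argument ensuring $Z$ does not contain any fiber of $f$) and that the lift $w$ inherits numerical triviality (which uses the projection formula together with finiteness of $g$). A secondary subtlety in the second step is the dimension bookkeeping: one must check that each summand of $\mathrm{CH}_{1}$ of the product of curves lands, after accounting for the Tate twists, in a Chow group of a product of Jacobians in the codimension for which the abelian-variety statement is available --- this is routine but unavoidable.
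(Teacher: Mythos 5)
The paper itself gives no proof of this statement; it is imported verbatim from [Seb13], so your attempt has to be measured against Sebastian's argument. Your first step is fine and standard: pushing forward along a surjection preserves smash-nilpotence because $(f^{\times m})_{*}(w^{\times m})=(f_{*}w)^{\times m}$, and the lift $w=\iota_{*}g^{*}z$ with $f_{*}w=Nz$, $w$ numerically trivial, is exactly the kind of reduction the paper uses elsewhere (proof of Theorem 3.8); note you do not even need $g=f|_{Z}$ to be finite, generically finite suffices since you only use the projection formula and $g_{*}g^{*}=N\cdot\mathrm{id}$ with $\mathbb{Q}$-coefficients. So the statement correctly reduces to a product of curves.

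The genuine gap is in your second step, in two respects. First, the ``known theorem'' you invoke is not available: Kahn--Sebastian prove smash-nilpotence of numerically trivial $1$-cycles only on abelian \emph{threefolds}; the statement for abelian varieties of arbitrary dimension is itself a consequence of the very theorem you are proving (every abelian variety is dominated by a product of curves, via a Jacobian surjecting onto it), so quoting it makes the argument circular. Second, even granting that input, the reduction does not go through, and the ``routine bookkeeping'' you defer is precisely where it breaks: a numerically trivial $1$-cycle on $C_{1}\times\cdots\times C_{r}$ decomposes into pieces lying in $\mathrm{CH}^{|S|-1}$ or $\mathrm{CH}^{|S|}$ of the totally primitive summands $\bigotimes_{i\in S}h^{1}(C_{i})$, and realizing such a summand inside $h\bigl(\prod_{i\in S}J(C_{i})\bigr)$ places the class in codimension $|S|-1$ (resp.\ $|S|$) on an abelian variety of dimension $\sum_{i\in S}g(C_{i})$; for large genera this is neither a $1$-cycle nor a $0$-cycle, so the abelian-variety $1$-cycle statement simply does not apply to these pieces. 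Sebastian's proof is not a formal transfer to abelian varieties: after the motivic decomposition he works with the primitive pieces themselves, using Kimura finite-dimensionality --- a summand $\bigotimes_{i\in S}h^{1}(C_{i})$ with $|S|$ odd is oddly finite-dimensional, so \emph{every} cycle class supported in it is smash-nilpotent --- while the evenly finite-dimensional pieces require a separate argument that genuinely uses numerical triviality of $1$-cycles. Without an argument of this kind (or an independent proof of the abelian case in all dimensions together with a mechanism that keeps the cycles one-dimensional), your step 2 does not establish the theorem.
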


Now we show Voevodsky's conjecture holds for 1-cycles on an arbitrary product of generalized Kummer varieties.

We will use the following result which can be shown by the same argument in \cite{Seb13}.

\begin{lemma}\label{lem3.7}
Let $X_{i}\ (1\leq i\leq r)$ be smooth projective varieties. Assume that all numerically trivial 1-cycles on $X_{i}\times X_{j}$ are smash-nilpotent for distinct $i$ and $j$. Then any numerically trivial 1-cycle on $X_{1}\times\cdots\times X_{r}$ is smash-nilpotent.
\end{lemma}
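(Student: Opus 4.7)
My plan is to adapt the K\"{u}nneth-decomposition argument of Sebastian \cite{Seb13}, reducing the cycle $z$ to pieces each of which lives, up to push-forward by a base-point inclusion, on a subproduct of at most two of the $X_i$.

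I would start by choosing base points $o_i\in X_i$ and invoking on each $X_i$ Murre's mutually orthogonal Chow-K\"{u}nneth projectors $\pi_0^i,\pi_1^i,\pi_{2d_i-1}^i,\pi_{2d_i}^i\in\text{CH}^{d_i}(X_i\times X_i)_{\mathbb{Q}}$, with $\pi_0^i=\{o_i\}\times X_i$ and $\pi_{2d_i}^i=X_i\times\{o_i\}$, together with the complementary idempotent $\pi_M^i:=\Delta_{X_i}-\pi_0^i-\pi_1^i-\pi_{2d_i-1}^i-\pi_{2d_i}^i$. Expanding $\Delta_X=\prod_i\Delta_{X_i}=\sum_T\Pi_T$ yields an orthogonal decomposition of the diagonal indexed by tuples $T=(\alpha_1,\ldots,\alpha_r)$ of projectors, and applying it to $z$ gives $z=\sum_T(\Pi_T)_*z$. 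Each summand is itself numerically trivial on $X$, since for any divisor $D$ we have $(\Pi_T)_*z\cdot D=z\cdot({}^t\Pi_T)_*D$ and ${}^t\Pi_T$ preserves codimension.

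For each $T$ set $I':=\{i:\alpha_i\ne\pi_{2d_i}^i\}$. Since $\pi_{2d_l}^l=X_l\times\{o_l\}$, the summand $(\Pi_T)_*z$ equals $(\iota_{I'})_*w_T$ for a $1$-cycle $w_T$ on $X_{I'}:=\prod_{i\in I'}X_i$, where $\iota_{I'}\colon X_{I'}\hookrightarrow X$ fills in the remaining coordinates with base points; a standard pullback/pushforward argument (using $\iota_{I'}^*p_{I'}^*=\mathrm{id}$ on divisors for the projection $p_{I'}\colon X\to X_{I'}$) then shows $w_T$ is numerically trivial on $X_{I'}$. The moving lemma applied to $z$ disposes of the tuples with $\alpha_i=\pi_0^i$ and $d_i\ge 2$, as well as the case $|I'|=0$. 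For $|I'|=2$, $w_T$ is a numerically trivial $1$-cycle on some $X_i\times X_j$, hence smash-nilpotent by the pair hypothesis; for $|I'|=1$, $w_T$ is a numerically trivial $1$-cycle on some $X_i$, and $w_T\times[o_j]$ is then a numerically trivial $1$-cycle on $X_i\times X_j$ for any $j\ne i$, again smash-nilpotent. Since smash-nilpotence is preserved by push-forward, $(\Pi_T)_*z$ is smash-nilpotent in both cases.

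The main obstacle is the residual range $|I'|\ge 3$, for which $w_T$ lives on a product of at least three factors not directly covered by the hypothesis. A K\"{u}nneth-degree count using the identity $\sum_i(2d_i-c(\alpha_i))=2$ (forced by $[z]\in H^{2d-2}(X)$) together with $2d_i-c(\alpha_i)\ge 1$ for each $i\in I'$ rules out $|I'|\ge 3$ cohomologically, so every such $(\Pi_T)_*z$ is automatically homologically trivial. I would argue that in fact $(\Pi_T)_*z=0$ in $\text{CH}_1(X)_{\mathbb{Q}}$ for these tuples, via a moving-lemma analysis of the refined intersection $\Pi_T\cdot p_1^*z$; making this rigorous across all combinations of $\pi_1^i,\pi_M^i,\pi_{2d_i-1}^i$ is the technical heart of the argument. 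In the concrete setting of this paper (products of generalized Kummer varieties), one may alternatively appeal to the finite-dimensionality of the associated Chow motives in the sense of Kimura--O'Sullivan (cf.\ the remark following Corollary \ref{cor2.8}) and the associated nilpotence results to eliminate the residual pieces.
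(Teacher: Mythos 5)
Your reduction of the problem to the summands $(\Pi_T)_*z$ is reasonable, and the cases $|I'|\le 2$ are handled correctly (the propagation of numerical triviality through $\Pi_T$ and through the base-point inclusions is fine). But the argument has a genuine gap exactly where you place the ``technical heart'': the summands meeting at least three factors nontrivially. Your degree count only shows that these pieces are \emph{homologically} trivial, and the proposed upgrade to $(\Pi_T)_*z=0$ in $\mathrm{CH}_1(X)_{\mathbb{Q}}$ via a moving-lemma analysis cannot work: homologically trivial $1$-cycles of precisely this shape are in general nonzero in the Chow group, and even modulo algebraic equivalence. The standard example is the Gross--Schoen modified small diagonal of a curve $C$ of genus $\ge 3$ in $C\times C\times C$, which lies in the summand $h^1(C)^{\otimes 3}$ (i.e.\ $|I'|=3$ with all three projectors of ``$\pi_1$'' type), is homologically trivial, and is generically nontrivial (Ceresa). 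So the residual pieces must be shown to be \emph{smash-nilpotent}, not zero, and nothing in your argument does this. Your fallback via Kimura--O'Sullivan finite-dimensionality does not close the gap either: first, the lemma is stated for arbitrary smooth projective $X_i$, where finite-dimensionality is not available; second, even granting it, finite-dimensionality is not known to imply that homologically (or numerically) trivial cycles are smash-nilpotent --- the usable consequences are that \emph{all} cycles on an \emph{oddly} finite-dimensional motive are smash-nilpotent and that numerically trivial correspondences between finite-dimensional motives are tensor-nilpotent, and one must actually reduce the residual pieces to statements of this kind.

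For comparison: the paper gives no proof of this lemma at all, saying only that it follows ``by the same argument in \cite{Seb13}''. The content of Sebastian's argument is precisely the treatment of the pieces you defer: there one works with curves, so the nontrivial factors are the oddly finite-dimensional motives $h^1(C_i)$, and the pieces involving three or more such factors are disposed of by finite-dimensionality mechanisms (smash-nilpotence of cycles on odd motives, nilpotence of numerically trivial correspondences), while the pieces involving at most two factors are exactly what the hypothesis on the pairwise products $X_i\times X_j$ is designed to cover. In the intended application the ambient $1$-cycles can be pushed through normalizations of curves (whose motives are finite-dimensional), which is how the general statement is made to work; your write-up neither carries this out nor supplies a substitute, so as it stands the proof is incomplete at its decisive step.
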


\begin{theorem}\label{thm3.8}
Let $A_{i} (1\leq i\leq r)$ be abelian surfaces and $K_{i}^{[n_{i}]}$ be the $n_{i}$-th generalized Kummer variety associated to $A_{i}$. Then any numerically trivial 1-cycle on the product $K_{1}^{[n_{1}]}\times\cdots\times K_{r}^{[n_{r}]}$ is smash-nilpotent.
\end{theorem}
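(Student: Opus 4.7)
The plan is to combine Lemma \ref{lem3.7}, the motivic decomposition of Theorem \ref{thm2.7}, and Theorem \ref{thm3.6}, reducing the question to numerically trivial cycles on abelian varieties, where smash-nilpotence is either known or follows from classical facts.

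First, by Lemma \ref{lem3.7} applied inductively, I would reduce to the cases $r=1$ and $r=2$, i.e., to a single $K^{[n]}$ and to a pair $K_{1}^{[n_{1}]}\times K_{2}^{[n_{2}]}$. Both are handled by parallel arguments, so let me focus on the single-factor case. By Theorem \ref{thm2.7}, there is an isomorphism $h(K^{[n]})\cong\bigoplus_{\lambda,\tau}h(K_{\tau}^{(\lambda)})\otimes\mathbb{L}^{n-\ell_{\lambda}}$, where each $K_{\tau}^{(\lambda)}=K_{\tau}^{\lambda}/\mathfrak{G}_{\lambda}$ is the quotient of the abelian variety $K_{\tau}^{\lambda}$ of dimension $2\ell_{\lambda}-2$ by the finite group $\mathfrak{G}_{\lambda}$. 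Using the averaging projector, $h(K_{\tau}^{(\lambda)})$ is a direct summand of $h(K_{\tau}^{\lambda})$, and thus $h(K^{[n]})$ becomes a direct summand of a sum of Tate twists of Chow motives of abelian varieties. Since the decomposition is realized by idempotent correspondences and numerical equivalence is compatible with correspondences, a numerically trivial cycle on $K^{[n]}$ breaks into numerically trivial pieces, each supported on one of these abelian summands.

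Next I would carry out the codimension count. A numerically trivial $1$-cycle on $K^{[n]}$ lies in codimension $2n-3$, and under the summand $h(K_{\tau}^{\lambda})\otimes\mathbb{L}^{n-\ell_{\lambda}}$ it corresponds to a cycle on the abelian variety $K_{\tau}^{\lambda}$ of codimension $n-3+\ell_{\lambda}$, equivalently of dimension $\ell_{\lambda}-n+1$. This is non-negative only for $\ell_{\lambda}\geq n-1$, which pins down exactly two partition types: either $\lambda=(1^{n})$, yielding $1$-cycles on the abelian variety $K_{0}^{(1^{n})}=\ker(A^{n}\to A)$; or $\lambda=(2,1^{n-2})$, yielding $0$-cycles on another abelian variety. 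For the first family, Theorem \ref{thm3.6} applies, since every abelian variety is dominated by a product of curves (as a quotient of the Jacobian of a sufficiently general curve, in turn dominated by the ordinary product of copies of that curve via its symmetric power). For the second family, a numerically trivial $0$-cycle on an abelian variety has degree zero, hence is algebraically trivial by the group law, hence smash-nilpotent by the classical Voevodsky--Voisin theorem (in the degenerate case where the abelian variety is zero-dimensional, numerical triviality forces the cycle itself to vanish, so smash-nilpotence holds trivially). Summing the smash-nilpotent pieces returns smash-nilpotence of the original $1$-cycle.

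The pair case $K_{1}^{[n_{1}]}\times K_{2}^{[n_{2}]}$ is treated identically using $h(K_{1}^{[n_{1}]})\otimes h(K_{2}^{[n_{2}]})$: the same codimension argument forces either all $\lambda_{i}=(1^{n_{i}})$ (yielding $1$-cycles on a product of abelian varieties) or exactly one $\lambda_{j}=(2,1^{n_{j}-2})$ with the other maximal (yielding $0$-cycles on a product of abelian varieties), and the same two ingredients close the argument. The main obstacle I anticipate is not a single deep input but the bookkeeping of numerical equivalence along the motivic correspondences and the combinatorial verification that only these two partition families contribute to $1$-cycles; once this is in place, Theorem \ref{thm3.6} together with the classical structure of $0$-cycles on abelian varieties completes the proof.
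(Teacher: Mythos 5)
Your argument is correct and follows the paper's strategy in all essentials: reduce to $r=2$ via Lemma \ref{lem3.7}, decompose the motive, observe that numerical triviality is preserved through the correspondences, and finish with Theorem \ref{thm3.6} for $1$-cycles and the Voevodsky--Voisin theorem for degree-zero $0$-cycles. The one genuine difference is the decomposition you use: you work directly with Theorem \ref{thm2.7}, so your summands are the quotients $K_{\tau}^{(\lambda)}=K_{\tau}^{\lambda}/\mathfrak{G}_{\lambda}$, you pass to the covering abelian variety $K_{\tau}^{\lambda}\subseteq A^{\ell_{\lambda}}$ by the averaging idempotent, and you carry out the partition count $\ell_{\lambda}\geq n-1$ explicitly; you then need (and correctly justify) the standard fact that every abelian variety is dominated by a product of curves so that Theorem \ref{thm3.6} applies. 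The paper instead first replaces $K_{1}^{[n_{1}]}\times K_{2}^{[n_{2}]}$ by $A_{1}\times K_{1}^{[n_{1}]}\times A_{2}\times K_{2}^{[n_{2}]}$ (using that smash-nilpotence for numerically trivial $m$-cycles descends along surjections) and invokes Corollary \ref{cor2.8}, so its summands are the symmetric products $A_{1}^{(\lambda)}\times A_{2}^{(\lambda')}$, whose Chow groups inject into those of the products of abelian surfaces $A_{1}^{\lambda}\times A_{2}^{\lambda'}$; this buys slightly cleaner bookkeeping (no torsors $K_{\tau}^{\lambda}$, no explicit enumeration of the two contributing partition types), at the cost of the auxiliary surjection. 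Your route is equally valid and arguably more direct; the only blemishes are cosmetic: $K_{0}^{(1^{n})}$ is the $\mathfrak{G}_{n}$-quotient of $\ker(A^{n}\to A)$ rather than the kernel itself (harmless, since you immediately pass to the cover), and one should note, as you implicitly do, that smash-nilpotence is preserved under the correspondences carrying the pieces back to $K^{[n]}$.
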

\begin{proof}
By Lemma \ref{lem3.7}, we may assume that $r=2$. Note that for a surjective morphism of smooth projective varieties $f: X\rightarrow Y$, if any numerically trivial $m$-cycle on $X$ is smash-nilpotent, then so is any numerically trivial $m$-cycle on $Y$.
Then we only have to prove the result for $A_{1}\times K_{1}^{[n_{1}]}\times A_{2}\times K_{2}^{[n_{2}]}$. Now according to Corollary \ref{cor2.8},
$$
h(A_{1}\times K_{1}^{[n_{1}]}\times A_{2}\times K_{2}^{[n_{2}]})\cong\bigoplus_{\lambda\in P(n_{1}),\lambda'\in P(n_{2})}\bigoplus_{\tau\in A_{1}[e(\lambda)],\tau'\in A_{2}[e(\lambda')]}h(A_{1}^{(\lambda)}\times A_{2}^{(\lambda')})\otimes\mathbb{L}^{n_{1}+n_{2}-\ell_{\lambda}-\ell_{\lambda'}}.
$$
Therefore,
$$
\text{CH}_{1}(A_{1}\times K_{1}^{[n_{1}]}\times A_{2}\times K_{2}^{[n_{2}]})_{\mathbb{Q}}\cong\bigoplus_{\ell_{\lambda}+\ell_{\lambda'}\geq n_{1}+n_{2}+1}\bigoplus_{\tau,\tau'}\text{CH}^{n_{1}+n_{2}+\ell_{\lambda}+\ell_{\lambda'}-1}(A_{1}^{(\lambda)}\times A_{2}^{(\lambda')})_{\mathbb{Q}}.
$$
So, any numerically trivial 1-cycle on $A_{1}\times K_{1}^{[n_{1}]}\times A_{2}\times K_{2}^{[n_{2}]}$ is mapped by the above isomorphism to a direct sum of numerically trivial 1-cycles or 0-cycles on $A_{1}^{(\lambda)}\times A_{2}^{(\lambda')}$. However, for any integer $p$, $\text{CH}^{p}(A_{1}^{(\lambda)}\times A_{2}^{(\lambda')})_{\mathbb{Q}}$ is a subgroup of $\text{CH}^{p}(A_{1}^{\lambda}\times A_{2}^{\lambda'})_{\mathbb{Q}}$ via the pullback of the quotient morphism. Now we can draw the conclusion by Theorem \ref{thm3.6} and the fact that numerically trivial 0-cycles are exactly algebraically trivial ones.
\end{proof}

\bigskip

\vspace{1cm}

{\small

\noindent
{\bf Ze Xu}\\ Beijing International Center for Mathematical Research, Peking University, 5 Yiheyuan Road, Haidian District, Beijing 100871, China\\
{\bf Email: xuze@amss.ac.cn}

\end{document}